\definecolor{darkgreen}{RGB}{0,120,0}
\newcommand{\blind}{0}
\newcommand\mysection{\@startsection {section}{1}{\z@}{-3.5ex \@plus -1ex \@minus -.2ex}{2.3ex \@plus.2ex}
{\normalfont\large\bfseries}}
\newcommand{\vect}[1]{\boldsymbol{#1}}
\newcommand{\inner}[1]{\left\langle #1 \right\rangle}
\newcommand{\phj}{\phi_{j\vect{\gamma}}}
\newcommand{\phs}{\phi_{s\vect{\gamma}}}
\newcommand{\tphj}{\widetilde{\phi}_{j\vect{\gamma}}}
\newcommand{\tphs}{\widetilde{\phi}_{s\vect{\gamma}}}
\newcommand{\hj}{h_{j\vect{\beta}}}
\newcommand{\thj}{\widetilde{h}_{j\vect{\beta}}}
\newcommand{\thjn}{\widetilde{h}_{j\widehat{\vect{\beta}}_n}}
\newcommand{\ths}{\widetilde{h}_{s\vect{\beta}}}
\newcommand{\rtcru}{RT\,Cru}
\begin{document}

\def\spacingset#1{\renewcommand{\baselinestretch}%
{#1}\small\normalsize} \spacingset{1}

%%%%%%%%%%%%%%%%%%%%%%%%%%%%%%%%%%%%%%%%%%%%%%%%%%%%%%%%%%%%%%%%%%%%%%%%%%%%%%

\if0\blind
{
  {\title{\bf A New Class of Asymptotically Distribution-Free Smooth Tests}}
\author[1]{Xiangyu Zhang}
\author[1]{Sara Algeri}
\affil[1]{\footnotesize{School of Statistics, University of Minnesota}
\vspace{3mm}
}
\affil[ ]{224 Church Street SE
Minneapolis, MN 55455, USA. 
\vspace{3mm}}
\affil[ ]{{\textit{Emails: \href{mailto:zhan6004@umn.edu}{zhan6004@umn.com}}}; {\textit{ \href{mailto:salgeri@umn.edu}{salgeri@umn.edu}}}}
\date{}
  \maketitle
} \fi

\if1\blind
{
  \bigskip
  \bigskip
  \bigskip
  \begin{center}
    {\LARGE\bf Title}
\end{center}
  \medskip
} \fi
\bigskip
\bigskip
\begin{abstract}
This article demonstrates how recent developments in the theory of empirical processes allow us to construct a new family of asymptotically distribution-free smooth tests. Their distribution-free property is preserved even when the parameters are estimated, model selection is performed, and the sample size is only moderately large.  A computationally efficient alternative to the classical parametric bootstrap is also discussed. 
\end{abstract}

\noindent%
{\textit {Keywords: goodness-of-fit tests, smooth tests, distribution-freeness}}
\vfill

\newpage
\spacingset{1.3} % DON'T change the spacing!
\section{Introduction} \label{sec:1}

Let $X$ be a continuous random variable with cumulative distribution function (CDF) $Q$ and probability density function (PDF) $q$
with support $\mathcal{X}\subseteq \mathbb{R}$. We are interested in assessing whether the unknown distribution $Q$ belongs to a family of distribution functions $G_{\vect{\beta}}$, with PDF $g_{\vect{\beta}}$,  differentiable with respect to $\vect{\beta} \in \mathcal{B} \subseteq \mathbb{R}^p$. To tackle this problem, we may consider an alternative model of the form 
\begin{equation}
    \begin{aligned}
 g_{\vect{\beta}}(x)\Big\{1+ \sum_{j=1}^{m} \theta_{j\vect{\beta}} h_{j\vect{\beta}}(x)\Big\}
\label{eqn:alter}
    \end{aligned}
\end{equation}
which incorporates the null density, $g_{\vect{\beta}}$, as a special case. The functions $\{h_{j\vect{\beta}}\}_{j=1}^\infty$ form an orthonormal basis in $ L^2(G_{\vect{\beta}})$; hence 
$$\int_{\mathcal{X}} h_{i\vect{\beta}}(x) h_{j\vect{\beta}}(x) d G_{\vect{\beta}}(x)=\mathds{1}_{\{i=j\}},$$
where $\mathds{1}_{\{\cdot\}}$ denotes the indicator function.
Assuming that the density ratio $\frac{q}{g_{\vect{\beta}}}\in L^2(G_{\vect{\beta}})$, as $m\rightarrow\infty$, the term in the curly brackets in \eqref{eqn:alter} provides an orthonormal expansion for $\frac{q}{g_{\vect{\beta}}}$. Hence, the coefficients 
of such an expansion correspond to $$\theta_{j\vect{\beta}}= \int_{\mathcal{X}} h_{j\vect{\beta}}(x)\frac{q(x)}{g_{\vect{\beta}}(x)} dG_{\vect{\beta}}(x)=\int_{\mathcal{X}} h_{j\vect{\beta}}(x)dQ(x),\quad \text{for all $j=1,\dots,m$}.$$ 

%Assuming that the true density $q$ is contained or well approximated by the alternative model in \eqref{eqn:alter},  
A smooth test aims to assess the validity of $g_{\vect{\beta}}$ by testing the null hypothesis:
\begin{equation}
    \begin{aligned}
   &H_0:  \theta_{1\vect{\beta}}=...=\theta_{m\vect{\beta}}=0, \text{ for some }\vect{\beta}\in \mathcal{B} 
\label{eqn:h0h1}
    \end{aligned}
\end{equation}
against smooth alternatives of the form \eqref{eqn:alter}, where at least one of the coefficients $\theta_{1\vect{\beta}},\ldots,$ $\theta_{m\vect{\beta}}$ is nonzero.
One could also rely on an exponential tilt to express the deviations of the alternative from $g_{\vect{\beta}}$ through the orthonormal set $\{h_{j\vect{\beta}}\}_{j=1}^m$, as in the original formulation of smooth tests proposed by \citet{neyman1937}. Other formulations may involve normalizing the expansion in \eqref{eqn:alter} to ensure that the resulting density is non-negative \citep[e.g.,][]{gajek1986}. 
Here, we rely on the alternative in \eqref{eqn:alter},  proposed by \cite{Barton1953}, by virtue of its simplicity, and we implicitly assume that the $\theta_{j\vect{\beta}}$ coefficients are suitably constrained to ensure \eqref{eqn:alter} is non-negative\footnote{For example,  as discussed in \cite{Barton1953}, when $h_{j\vect{\beta}}$ are the normalized shifted Legendre polynomials evaluated at the probability integral transform $G_{\vect{\beta}}(x)$ and $m=1$, non-negativity of \eqref{eqn:alter} is achieved whenever $|\theta_{1\vect{\beta}}|<\frac{1}{3}$; whereas, if $m=2$, we must have $|\sqrt{5}\theta_{2\vect{\beta}}|\geq |\sqrt{3}\theta_{1\vect{\beta}}|$.}. 
Nevertheless, the methods described in what follows can be adapted to other specifications of the alternative model for which the testing problem can be formulated as in \eqref{eqn:h0h1}.

Several test statistics have been proposed in the literature for testing \eqref{eqn:h0h1}. Prominent examples include the score test statistic \citep[][]{neyman1937, Barton1953}, which consists of the sum of squares of the estimated coefficients in the expansion in \eqref{eqn:alter}, and typically employed for testing simple hypotheses; the generalized score test statistic\footnote{Also known as the generalized smooth test statistic or efficient score test statistic.} \citep[][]{javitz1975generalized, david1979} used for testing composite hypotheses; and the order selection test statistic \citep[][]{marc1999} which naturally accounts for the selection of $m$ in \eqref{eqn:alter}. Their asymptotic null distributions are known, but may require a very large sample size to reach such limits \citep{Klar2000}. 
Data-driven smooth tests \citep{Led1994} focus on the situation in which the basis functions to be included in the expansion in \eqref{eqn:alter} are determined by data-driven selection criteria -- such as BIC \citep[e.g.,][]{Led1994,kall1995a,kall1997jasa, Inglot1997,inglot2006atowards}. 
Under the null model $G_{\vect{\beta}}$, classical smooth test statistics are asymptotically distribution-free.

For all the above-mentioned tests, asymptotic distribution-freeness arises as a consequence of the inherent structure of the test statistics, while the choice of basis $\{h_{j\vect{\beta}}\}_{j=1}^\infty$ remains somewhat arbitrary. We propose a shift of perspective in which a careful construction of the basis ensures that any test statistic based on its elements is asymptotically distribution-free by design. Such a construction relies on the \textit{Khmaladze-2 (K2) transform}\footnote{The name
``Khmaladze-2 transform'' is used to distinguish the transformation employed in this manuscript and proposed in \citet{K22016} from an earlier ``Khmaladze transform'' introduced by the same author in \cite{gofk1}.}  introduced by \cite{K22016}, which enables both accurate and computationally efficient inference. 

The K2 transform uses unitary operators to map model-dependent projections into a ``standard projection'' that is arbitrarily chosen by the user. This standard projection does not vary with the model being tested and is asymptotically distribution-free under the null hypothesis.
In just over a decade, the K2 transformation has enabled several advances in the goodness-of-fit literature across various contexts---from testing discrete and multivariate parametric distributions \citep{khm13,K22016} to regression \citep{khm21,KHMALADZE2017348} and grouped data analysis \citep{sara2024}. By enabling its use in the context of smooth tests, this work extends its usefulness beyond goodness-of-fit. Specifically, it provides applied scientists with a new suite of distribution-free tests that offer a middle ground between goodness-of-fit tests and directional tests such as Neyman-Pearson.

While asymptotically distribution-freeness is especially desirable when testing complex distributions and/or when computationally intensive parameter estimation is needed \textemdash \ as is often the case in physics and astronomy \citep[e.g.,][]{GAMBIT1,Cusin:2017mjm,Cusin:2019jpv, PRD,PRDL} \textemdash \ in other settings, applied scientists may choose to derive the null distributions of the test statistics by means of resampling methods. In Section~\ref{sec:3}, we discuss a computationally efficient alternative to the classical parametric bootstrap, called the \textit{projected bootstrap}. Specifically, one can avoid re-estimating the unknown parameter $\vect{\beta}$ on each bootstrap replicate by exploiting a projection of the underlying empirical process. The efficacy of this approach when deriving the distribution of classical goodness-of-fit statistics estimated via maximum likelihood has been investigated numerically by \cite{algeri2022k}. Here, a formal proof is provided to confirm the validity of this approach in the context of smooth tests and for general $Z$-estimators. %Furthermore, a suite of numerical studies demonstrates that the theoretical limiting null distribution of such tests well approximates the true null distribution even for samples that are only moderately large (cf. Section~\ref{sec:4}). 

\section{Smooth tests and the function-parametric empirical process} \label{sec:2}

Let $X_1, \ldots, X_n$ be independent and identically distributed (IID) random variables with CDF $Q$. Consider the Hilbert space $L^2(G_{\vect{\beta}}) = \{ h: \inner{h,h}_{G_{\vect{\beta}}}<\infty\},$
with inner product: 
$$\inner{h,h'}_{G_{\vect{\beta}}} = \int_{\mathcal{X}}h(x)h'(x)dG_{\vect{\beta}}(x).$$
The \textit{function-parametric} empirical process $v_{\vect{\beta},n}(h)$ indexed by function $h$ in $L^2(G_{\vect{\beta}})$ is defined as: 
\begin{equation*}
    \begin{aligned}
    &v_{\vect{\beta},n}(h) = \int_{\mathcal{X}}h(x)dv_{\vect{\beta},n}(x) = 
    \frac{1}{\sqrt{n}}\sum_{i=1}^n \bigl[h (X_i)- \mathbb{E}_{G_{\vect{{\beta}}}}[h (X_i)]
\bigl]
    \label{eqn:2dv}
    \end{aligned}
\end{equation*}
and 
$$v_{\vect{\beta},n}(x) = \frac{1}{\sqrt{n}} \sum\limits_{i=1}^n \left[\mathds{1}_{\left\{X_{i} \leq x\right\}}-G_{\vect{\beta}}(x)\right]$$
is the classical empirical process. Hence, $v_{\vect{\beta},n}(h) = v_{\vect{\beta},n}(x)$ when $h(z)=\mathds{1}_{\{z\leq x\}}$. 

Most statistics proposed in the literature to test \eqref{eqn:h0h1} can be specified as functionals of the process $v_{\vect{\beta},n}(h)$ and its projections. In particular, let 
$$\bm{u}_{\vect{\beta}}=\nabla_{\vect{\beta}}\ln g_{\vect{\beta}}  \quad \text{and} \quad  \Gamma_{\boldsymbol{\beta}}=\inner{\vect{u}_{\vect{\beta}},\vect{u}^T_{\vect{\beta}}}_{G_{\beta}}$$ be, respectively, the score function and the Fisher information matrix of $G_{\vect{\beta}}$. Define the orthonormalized score function, with coordinates in $L^2(G_{\vect{\beta}})$,  as 
\begin{equation*}
    \begin{aligned}
\vect{b}_{\vect{\beta}} = \left[b_{\vect{\beta}_1}, \ldots, b_{\vect{\beta}_p}\right]^T=\Gamma_{\vect{\beta}}^{-1 / 2} \vect{u}_{\vect{\beta}},
    \end{aligned}
\end{equation*} 
where $\Gamma_{\vect{\beta}}^{-1 / 2}$ denotes the principal square root matrix of $\Gamma_{\vect{\beta}}^{-1}$. 
Denote with $\{\widetilde{h}_{j\vect{\beta}}\}_{j=1}^m$ the ``residuals'' of $\{h_{j\vect{\beta}}\}_{j=1}^m$ in \eqref{eqn:alter} after an orthogonal projection onto $\vect{b}_{\vect{\beta}}$, i.e., 
\begin{equation}
    \begin{aligned}
    \widetilde{h}_{j\vect{\beta}} =  h_{j\vect{\beta}} -\vect{b}^T_{\vect{\beta}}\left\langle \vect{b}_{\vect{\beta}},h_{j\vect{\beta}}\right\rangle_{G_{\vect{\beta}}}
    = h_{j\vect{\beta}} - \sum_{k=1}^p b_{\vect{\beta}_k}\left\langle h_{j\vect{\beta}},b_{\vect{\beta}_k}\right\rangle_{G_{\vect{\beta}}},  \text{ for } j=1,\ldots,m.
    \label{eqn:tildeh}
    \end{aligned}
\end{equation}
%This constructs the so-called efficient score functions \citep[cfr.][pp.90-91]{thas2010} employed when testing parametric null hypotheses, which properly account for the effect of estimating and refitting $\vect{\beta}$ within the model family while focusing the test exclusively on departures that cannot be explained by changes in $\vect{\beta}$. This ensures that implementing tests in \eqref{eqn:h0h1} utilizing a consistent estimator $\widehat{\vect{\beta}}_n$ of $\vect{\beta}$ are asymptotically equivalent to those of the infeasible procedure that uses the true unknown $\vect{\beta}_0$ under $H_0.$}
Deviations within the parametric family $\{G_{\vect{\beta}}, \ \vect{\beta}\in \mathcal{B}\}$ occur in the direction given by the the score function $\vect{u}_{\vect{\beta}}$. Therefore, since the functions  $\widetilde{h}_{j\vect{\beta}}$ are obtained by projecting $h_{j\vect{\beta}}$ onto the orthogonal complement of the tangent space given by the span of $\vect{u}_{\vect{\beta}}$, they capture deviations outside the hypothesized parametric family. 

The most widely used statistics for testing $\eqref{eqn:h0h1}$ is the generalized score test statistic, \citep[cfr.][]{javitz1975generalized,david1979}, defined as 
\begin{equation}
    \begin{aligned}
\widehat{S}_{m,n}  
= \sum_{i=1}^m \sum_{j=1}^m v_{\widehat{\vect{\beta}}_n,n}(\widetilde{h}_{i\widehat{\vect{\beta}}_n})\bigl(\Sigma^{-1}_{m,\widehat{\vect{\beta}}_n}\bigl)_{ij} v_{\widehat{\vect{\beta}}_n,n}(\widetilde{h}_{j\widehat{\vect{\beta}}_n})
\label{eqn:gst}
    \end{aligned}
\end{equation}  
where $\Sigma^{-1}_{m,\widehat{\vect{\beta}}_n}$ is the inverse of estimated variance-covariance matrix, ${\Sigma}_{m,\widehat{\vect{\beta}}_n}$, of elements
$$\bigl({\Sigma}_{m,\widehat{\vect{\beta}}_n}\bigl)_{ij} = \mathbb{E}_{G_{\vect{{\beta}}}}\left[v_{\vect{\beta},n}(\widetilde{h}_{i{\vect{\beta}}})v_{\vect{\beta},n}(\widetilde{h}_{j{\vect{\beta}}})\right]\Bigl|_{\vect{\beta}=\vect{\widehat{\beta}}_n},$$ 
which we assume to be a continuous function in $\vect{\beta}$, and 
$\widehat{\vect{\beta}}_n$ denotes a consistent estimator of $\vect{\beta}$ (see condition~\ref{A1} in Section~\ref{sec:3}). 

When $m$, the number of basis functions to be used in \eqref{eqn:alter} and \eqref{eqn:gst}, is determined on the basis of the data observed, the so-called order selection test statistic \citep[cfr.][]{marc1999} can be used to incorporate the choice of the order $m$ directly into its formulation.
Specifically, let $m$ be the maximizer of the selection criterion
\begin{equation}
\widehat{S}_{m,n} - C_{\alpha,n} m, \quad \text{ with } m=0,\dots, M_n,
\label{eqn:selection_criteria}
\end{equation}
in which $\widehat{S}_{m,n}$ is the generalized score test statistic in \eqref{eqn:gst} or its non-normalized counterpart:
\begin{equation}
\widehat{S}_{m,n} = \sum_{j=1}^m v^2_{\widehat{\vect{\beta}}_n,n}(\widetilde{h}_{j\widehat{\vect{\beta}}_n}).
\label{eqn:non_norm}
\end{equation}
%with $ \widehat{S}_{0,n}=0$ in both cases.  
Let $M_n$ denote the maximum number of basis functions to be considered, which could either be fixed or grow to infinity as $n\to\infty.$ Selecting   $m$ using the criterion in \eqref{eqn:selection_criteria} corresponds to choosing the first $m$ basis functions among $\{h_{j\vect{\beta}}\}_{j=1}^{M_n}$ to be included in the expansion in \eqref{eqn:alter}. 

The underlying idea of a test based on \eqref{eqn:selection_criteria} is to reject the null hypothesis in \eqref{eqn:h0h1} if the criterion in \eqref{eqn:selection_criteria} is larger than zero for some $m$ in $\mathcal{M}_n = \left\{1, \ldots, M_n\right\}$ -- that is, when an alternative model of the form in \eqref{eqn:alter} is favored over $G_{\vect{\beta}}$. 
Equivalently, $G_{\vect{\beta}}$ is rejected when  the \textit{order selection test statistic} defined as
\begin{equation}
\begin{aligned}
\widehat{T}_n=\max _{m \in \mathcal{M}_n}\left\{\frac{\widehat{S}_{m,n}}{m}  \right\},
\label{eqn:ordersel}
\end{aligned}
\end{equation}
is such that $\widehat{T}_n \geq C_{\alpha,n}$, for some constant $C_{\alpha,n}$ that controls the significance level $\alpha$ of the test. 

The criterion in \eqref{eqn:selection_criteria} can be generalized so that basis functions with indexes $j$ in any subset $B$ of $\{1,\dots, M_n\}$, not necessarily ordered from $j = 1, \dots, m$, are selected \citep[][pp.103-107]{thas2010}. In this case, the set of indices $B$ is chosen to maximize
\begin{equation}
\widehat{S}_{B,n} - C_n |B|, \quad \text{ with } B \subseteq \mathcal{M}_n, 
\label{eqn:selection_criteria_subset}
\end{equation}
where $|B|$ denotes the cardinality of $B$ and $\widehat{S}_{B,n}$ is the counterpart of $\widehat{S}_{m,n}$ with the summations in \eqref{eqn:gst} and \eqref{eqn:non_norm} taken over the indexes in $B$. 
The corresponding \textit{subset selection test statistic}  specifies as 
\begin{equation}
\begin{aligned}
    \widetilde{T}_{n} = \max_{ B\subseteq \mathcal{M}_n: B \neq \emptyset} \left\{\frac{\widehat{S}_{B, n}}{|B|} \right\}.
\label{eqn:subsetsel}
\end{aligned}
\end{equation}
Compared to $\widehat{T}_n$ in \eqref{eqn:ordersel}, $\widetilde{T}_n$ offers greater flexibility in the selection of the basis functions and thus accommodates a wider range of possible alternative models to be employed in the expansion in \eqref{eqn:alter}. 

The limiting distribution of the subset selection statistic in $\eqref{eqn:subsetsel}$ cannot be easily derived. Therefore, a computationally efficient algorithm is required to simulate its null distribution, especially when $M_n$ is large. In the next section, we introduce an alternative to the classical parametric bootstrap to perform this task. 

\section{Smooth tests via projected bootstrap} \label{sec:3}
The classical parametric bootstrap requires re-estimating the unknown parameter $\vect{\beta}$ on each bootstrap sample to account for the variability introduced by estimation. In some instances, however, a repeated estimation can make the procedure computationally intensive. The so-called ``projected bootstrap" \citep{algeri2022k,sara2024} overcomes this limitation by exploiting the projection structure induced by parameter estimation 
 to avoid repeating the estimation of the parameter on each bootstrap replicate. As shown in what follows, such a projection arises in the context of smooth tests rather organically. 

Let the true (but unknown) parameter vector under the null be $\vect{\beta}_0$, that is, if
$H_0$ in \eqref{eqn:h0h1} holds, then $Q\equiv G_{\vect{\beta}_0}$. Let $\mathcal{N}\subset\mathcal{B}$ be the closure of a neighborhood of the true parameter value $\vect{\beta}_0$.  Denote with $\mathcal{L}\left(G_{\vect{\beta}}\right)$ the subspace of $L^2\left(G_{\vect{\beta}}\right)$ given by
$$\mathcal{L}\left(G_{\vect{\beta}}\right)=\left\{ h \in L^2\left(G_{\vect{\beta}}\right): \inner{h,\bm{1}}_{G_{\vect{\beta}}}=0\right\}.$$ 
In what follows, we focus our attention on the subset of functions in $\mathcal{L}\left(G_{\vect{\beta}}\right)$ that depend on $\vect{\beta}$, with $\vect{\beta}\in\mathcal{N}$. Such functions primarily intervene in our study by either characterizing the alternative in \eqref{eqn:alter} or defining the estimating equations for $\vect{\beta}$. 
We assume such functions to be continuously differentiable in $\vect{\beta}$ and to satisfy the usual regularity conditions used in standard asymptotic arguments \citep[cf.][Ch.5]{van2000asymptotic} and which imply the following:
\begin{enumerate} [label=(A\arabic*)]
\item \label{A1} Let $\widehat{\vect{\beta}}_n$ be a consistent root of the system of estimating equations
 \begin{equation}
 \label{ref:estimation}
 v_{\widehat{\vect{\beta}}_n,n}( \vect{\psi}_{\widehat{\vect{\beta}}_n})=0,
 \end{equation}
where $\vect{\psi}_{\vect{\beta}}$ is a $p$-dimensional vector function with linearly independent components in $\mathcal{L}\left(G_{\vect{\beta}}\right)$. 
Denote with $\nabla_{\vect{\beta}}h_{\vect{\beta}}$ the gradient of  $h_{\vect{\beta}}$ taken with respect to $\vect{\beta}$. The Taylor expansion
\begin{equation}\label{eqn:Taylor}
v_{\widehat{\vect{\beta}}_n,n}(h_{\widehat{\vect{\beta}}_n}) = v_{\vect{\beta}_0,n}(h_{\vect{\beta}_0}) +  (\widehat{\vect{\beta}}_n- \vect{\beta}_0)^T \nabla_{\vect{\beta}} v_{\vect{\beta},n}(h_{\vect{\beta}})\Bigl|_{\vect{\beta}=\vect{\beta}_0}
+o_P(1)
\end{equation}
is valid.
\item \label{A2}  For any $h_{\vect{\beta}} \in \mathcal{L}\left(G_{\vect{\beta}}\right)$ and $\vect{\beta} \in \mathcal{N}$, 
$$\nabla_{\vect{\beta}} \int_{\mathcal{X}}  h_{\vect{\beta}}(x)dG_{\vect{\beta}}(x)=\int_{\mathcal{X}} \nabla_{\vect{\beta}} \Big[ h_{\vect{\beta}}(x)dG_{\vect{\beta}}(x)\Big].$$
\end{enumerate}
To ease the intuition, in what follows, we assume \ref{A1}-\ref{A2} directly.  

As first proven by \cite{K2}, when replacing $\vect{\beta}$ with $\widehat{\vect{\beta}}_n$ in $v_{{\vect{\beta}},n}(h_{{\vect{\beta}}})$, the resulting process is asymptotically equal to a projection of the latter evaluated at the true parameter value $\vect{\beta}_0$:
\newtheorem{prop}{Proposition}
\begin{prop}[Khmaladze projection, \citet{K2}]
\label{prop:process_equiv}
If \ref{A1}-\ref{A2} hold, then, under $H_0,$
\begin{equation}
v_{\widehat{\vect{\beta}}_n,n}(h_{\widehat{\vect{\beta}}_n}) = v_{\vect{\beta}_0,n}( \Pi h_{\vect{\beta}_0}) + o_P(1), 
\label{eqn:process_equiv}
\end{equation}
where 
\begin{equation}
\label{eqn:pi}
    \Pi h_{\vect{\beta}} = h_{\vect{\beta}} -\vect{\psi}_{\vect{\beta}}^T\inner{\vect{u}_{\vect{\beta}},\vect{\psi}^T_{\vect{\beta}}}^{-1}_{G_{\vect{\beta}}}\inner{\vect{u}_{\vect{\beta}},h_{\vect{\beta}}}_{G_{\vect{\beta}}}
\end{equation}
is a projection of $h_{\vect{\beta}}$.
%when  $\langle{\vect{u}_{\vect{\beta}},\vect{\psi}_{\vect{\beta}}^T\rangle}_{G_{\vect{\beta}}} = I_p$.
\end{prop}
\begin{proof}
We give a proof adapted to the notation and setting of the present manuscript.
Under Assumption \ref{A2}, we have,
\begin{equation}
\begin{aligned}
\frac{1}{\sqrt{n}}
\nabla_{\vect{\beta}} v_{\vect{\beta},n}(h_{\vect{\beta}})
\Big|_{\vect{\beta}=\vect{\beta}_0}
&=
\frac{1}{\sqrt{n}}
\int_{\mathcal{X}}
\nabla_{\vect{\beta}}
\!\left[
h_{\vect{\beta}}(x)\, dv_{\vect{\beta},n}(x)
\right]
\Big|_{\vect{\beta}=\vect{\beta}_0}
\\[0.4em]
&=
\frac{1}{\sqrt{n}}
\int_{\mathcal{X}}
\left[
\nabla_{\vect{\beta}} h_{\vect{\beta}}(x)
\right]\Big|_{\vect{\beta}=\vect{\beta}_0}
\, dv_{\vect{\beta}_0,n}(x)
-
\int_{\mathcal{X}}
h_{\vect{\beta}_0}(x)\,
\left[
\nabla_{\vect{\beta}} g_{\vect{\beta}}(x)
\right]\Big|_{\vect{\beta}=\vect{\beta}_0}
\, dx
\\[0.4em]
&=
-\,
\inner{
\vect{u}_{\vect{\beta}_0},
h_{\vect{\beta}_0}
}_{G_{\vect{\beta}_0}}
+ o_P(1),
\end{aligned}
\label{eqn:gred}
\end{equation}
in which the last equality holds by the law of large numbers. 
Substituting the right-hand side of such an equality into \eqref{eqn:Taylor} gives:
$$v_{\widehat{\vect{\beta}}_n,n}(h_{\widehat{\vect{\beta}}_n}) = v_{\vect{\beta}_0,n}(h_{\vect{\beta}_0}) - \sqrt{n}(\widehat{\vect{\beta}}_n- \vect{\beta}_0)^T \inner{\vect{u}_{\vect{\beta}_0},h_{\vect{\beta}_0}}_{G_{\vect{\beta}_0}}
+o_P(1).$$
Since  the components of $\vect{\psi}_{{\vect{\beta}}}$ belong to $\mathcal{L}\left(G_{\vect{\beta}}\right)$, the asymptotic equality above also holds for $v_{\widehat{\vect{\beta}}_n,n}( \vect{\psi}_{\widehat{\vect{\beta}}_n})$ and equating it to zero leads to:
$$\sqrt{n}(\widehat{\vect{\beta}}_n- \vect{\beta}_0) =  \inner{\vect{u}_{\vect{\beta}_0},\vect{\psi}^T_{\vect{\beta}_0}}^{-1}_{G_{\vect{\beta}_0}}v_{\vect{\beta}_0,n}(\vect{\psi}_{\vect{\beta}_0})
+o_P(1).$$
Combining the last two expressions yields:
$$v_{\widehat{\vect{\beta}}_n,n}(h_{\widehat{\vect{\beta}}_n}) = v_{\vect{\beta}_0,n}(h_{\vect{\beta}_0}) -  v_{\vect{\beta}_0,n}(\vect{\psi}^T_{\vect{\beta}_0})\inner{\vect{u}_{\vect{\beta}_0},\vect{\psi}^T_{\vect{\beta}_0}}^{-1}_{G_{\vect{\beta}_0}}\inner{\vect{u}_{\vect{\beta}_0},h_{\vect{\beta}_0}}_{G_{\vect{\beta}_0}}
+o_P(1),$$
which can be equivalently expressed as in \eqref{eqn:process_equiv}-\eqref{eqn:pi} by linearity of  $\Pi$.
Additionally, it is easy to verify that
$\Pi \Pi h_{\vect{\beta}} = \Pi h_{\vect{\beta}},$
thus, $\Pi$ is a projection operator.
\end{proof}
In general, the projection given by $\Pi$ is not orthogonal; that is because $$\Pi \vect{\psi}_{\vect{\beta}}=0 \quad \text{and}\quad\inner{\vect{u}_{\vect{\beta}},\Pi h_{\vect{\beta}}}_{G_{\vect{\beta}}}=0$$ 
while neither $\Pi \vect{u}_{\vect{\beta}}$ nor $\inner{\vect{\psi}_{\vect{\beta}},\Pi h_{\vect{\beta}}}_{G_{\vect{\beta}}}$ are zero.
From the above expressions, however, it is clear that $\Pi$ defines an orthogonal projection when $\vect{\psi}_{\vect{\beta}}\equiv \vect{u}_{\vect{\beta}}$, that is, when the estimation is conducted via maximum likelihood. 

In the context of smooth tests, Proposition~\ref{prop:process_equiv} implies that the processes $v_{\widehat{\vect{\beta}}_n,n}$ and $v_{{\vect{\beta}}_0,n}$, indexed respectively by $\widetilde{h}_{j\widehat{\vect{\beta}}_n}$ and $\Pi \widetilde{h}_{j\vect{\beta}_0}$, are asymptotically equivalent. Furthermore, the functions $\widetilde{h}_{j\vect{\beta}}$ are orthogonal to $\vect{u}_{\vect{\beta}}$ by construction (see Equation \eqref{eqn:tildeh}). It follows that 
$$\Pi \widetilde{h}_{j\vect{\beta}_0} 
= \widetilde{h}_{j \vect{\beta}_0} - \vect{\psi}_{\vect{\beta}_0}^T\inner{\vect{u}_{\vect{\beta}_0},\vect{\psi}^T_{\vect{\beta}_0}}^{-1}_{G_{\vect{\beta}_0}}\bigl\langle{\vect{u}_{\vect{\beta}_0},\widetilde{h}_{j \vect{\beta}_0}\bigl\rangle}_{G_{\vect{\beta}_0}} 
= \widetilde{h}_{j\vect{\beta}_0}.$$ 
This fact, combined with the asymptotic equality in \eqref{eqn:process_equiv}, brings us to the following proposition.
\begin{prop} \label{prop3}
Assume that \ref{A1} and \ref{A2} hold for each $\widetilde{h}_{j\vect{\beta}}$, $j\in\mathcal{M}_n$, then 
\begin{equation}\label{prop:3_result}
v_{\vect{\beta}_0,n}( \widetilde{h}_{j\vect{\beta}_0}) = v_{\vect{\beta}_0,n}(\Pi \widetilde{h}_{j\vect{\beta}_0}) = v_{\widehat{\vect{\beta}}_n,n}(\widetilde{h}_{j\widehat{\vect{\beta}}_n}) +o_P(1). 
\end{equation}
\end{prop}
To understand the computational advantages entailed by Proposition~\ref{prop3}, denote with $\widehat{\vect{\beta}}_{\text{obs}}$ the estimate of $\vect{\beta}$ obtained on the observed data by solving \eqref{ref:estimation}. In the parametric bootstrap, such an estimate plays the same role as $\vect{\beta}_0$ in \eqref{prop:3_result}. Let $\widehat{\vect{\beta}}_n^{(b)}$ be the parameter estimate obtained on the $b$-th bootstrap sample generated from  $G_{\widehat{\vect{\beta}}_{\text{obs}}}.$ Equation~\eqref{prop:3_result}  implies that the empirical process indexed by functions    $\widetilde{h}_{j\widehat{\vect{\beta}}^{(b)}_n}$ is asymptotically equal to the process indexed by $\widetilde{h}_{j\widehat{\vect{\beta}}_{\text{obs}}}$. In other words, the projection structure characterizing the functions $\widetilde{h}_{j\widehat{\vect{\beta}}_{\text{obs}}}$ makes the effect of re-estimating the parameter asymptotically negligible. Thus, for sufficiently large $n$, the user must evaluate the process $v_{\widehat{\vect{\beta}}_{\text{obs}},n}(\widetilde{h}_{j\widehat{\vect{\beta}}_{\text{obs}}})$ over different bootstrap samples but does not need to repeat the estimation of $\vect{\beta}$ at each replicate. 
It follows that this \emph{projected bootstrap} procedure 
can always make the simulation more efficient. As demonstrated in the next section with an example, the projected bootstrap is particularly advantageous when parameter estimation is time-consuming. 

Finally, the consistency of the projected bootstrap in recovering the true distribution of the function-parametric empirical process $v_{\vect{\beta}_0,n}(\widetilde{h}_{j\vect{\beta}_0})$ is guaranteed under the same conditions needed for the classical parametric bootstrap \citep[cfr.][]{babu2004}.

\subsection{Empirical illustrations of smooth tests via the projected bootstrap} \label{sec:3.1}
Let $G_{\vect{\beta}}$  be the asymmetric Laplace distribution with unknown asymmetry parameter $\vect{\beta}$. The corresponding PDF, $g_{\vect{\beta}}$, can be expressed as:
\begin{equation}
    \begin{aligned}
g_{\vect{\beta}}(x)=\begin{cases}\frac{\sqrt{2}}{\sigma} \frac{\vect{\beta}}{1+\vect{\beta}^2} \exp \left(-\frac{\sqrt{2} \vect{\beta}}{\sigma}(x-\theta)\right), & \text { for }  x \geq \theta \\ \frac{\sqrt{2}}{\sigma} \frac{\vect{\beta}}{1+\vect{\beta}^2} \exp \left(\frac{\sqrt{2}}{\sigma \vect{\beta}}(x-\theta)\right), & \text { for }  x<\theta\end{cases}.
\label{eqn:den}
    \end{aligned}
\end{equation}
In what follows, the parameters $\theta$ and $\sigma$ are set equal to 0 and 2, respectively, and are assumed to be known.

\begin{figure}%
    \centering
    \includegraphics[width=0.49\textwidth]{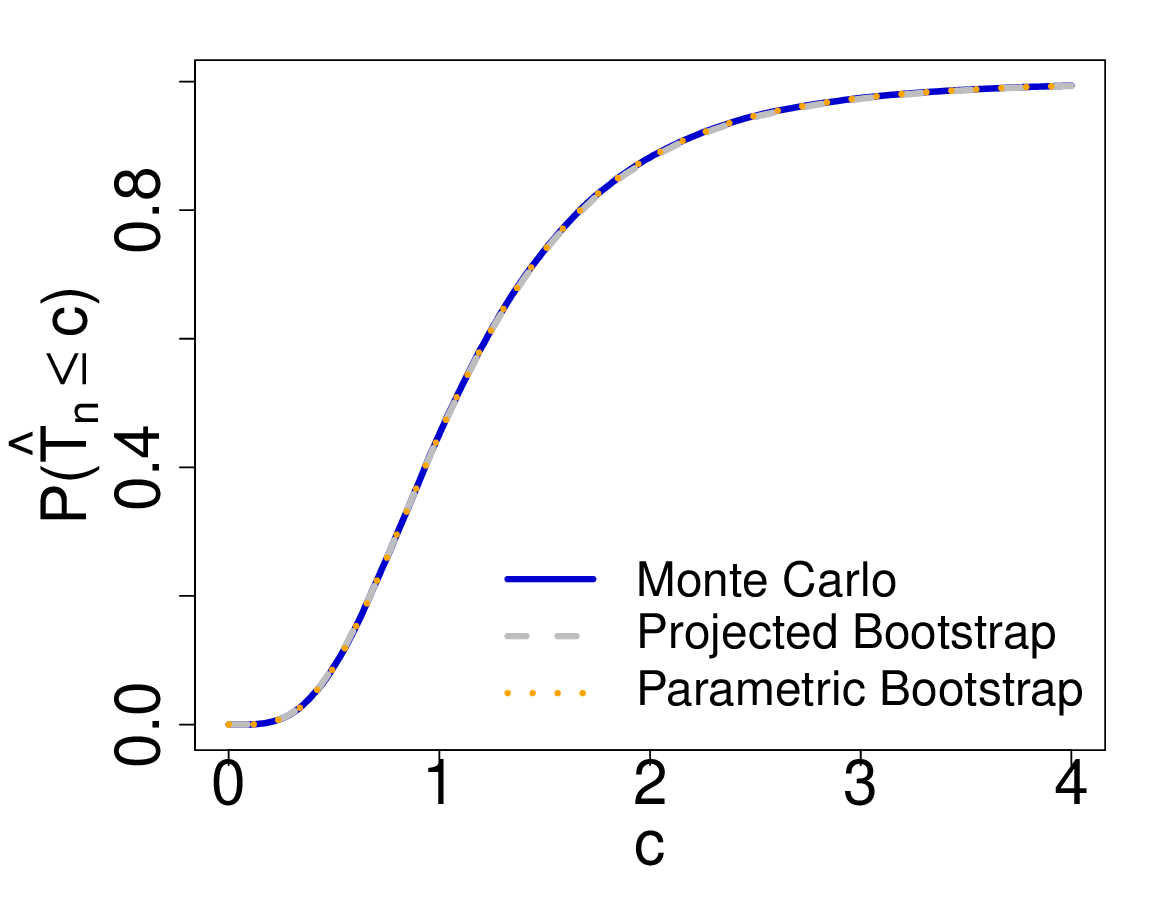} 
    \includegraphics[width=0.49\textwidth]{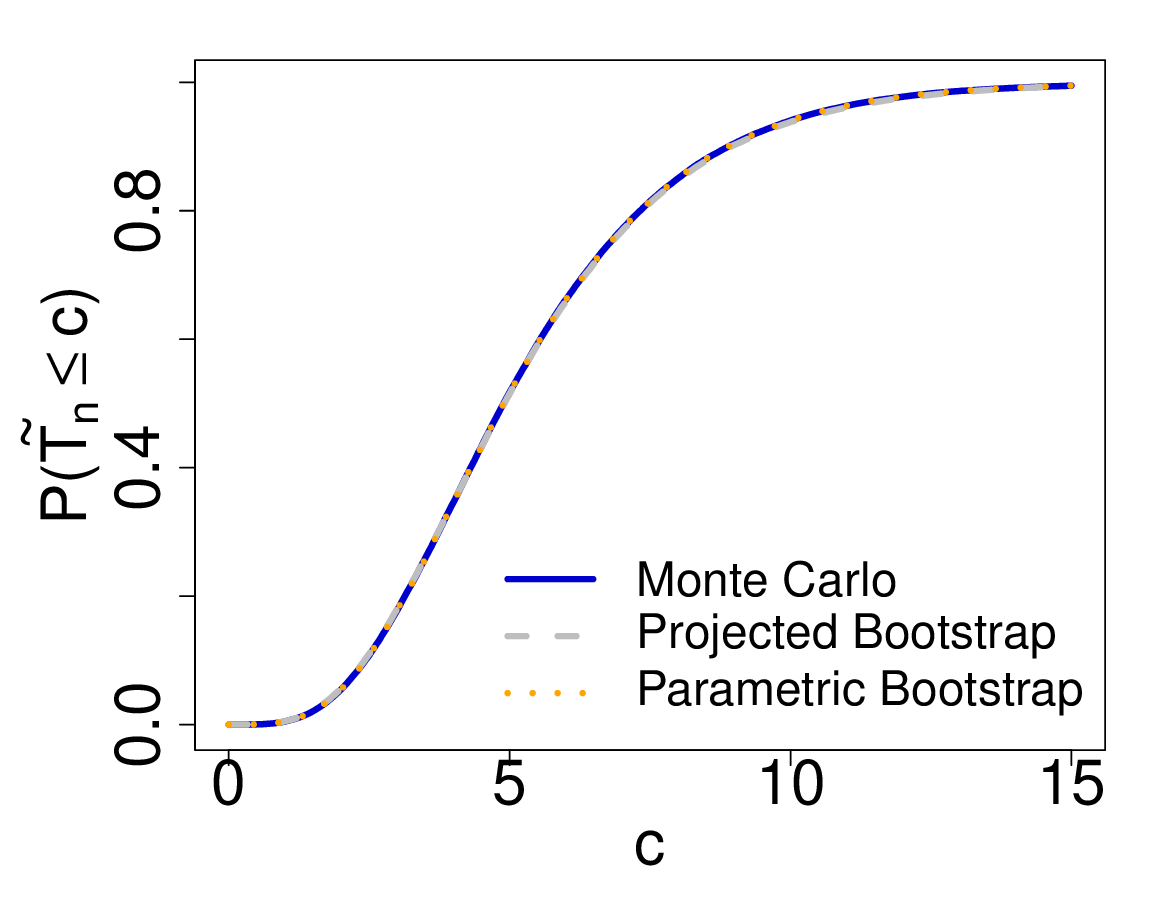}
    \caption{Comparing the simulated null distributions of the order selection test statistic in \eqref{eqn:ordersel} (Left), and subset selection test statistic in \eqref{eqn:subsetsel} (Right) via the parametric bootstrap (orange dotted lines), the projected bootstrap (grey dashed lines), and   Monte Carlo  (blue solid lines). }
 \label{fig:image2}
\end{figure}
A dataset $\bm{x}_n=(x_1,\dots,x_n)$ of $n=100$ observations is generated from the density specified in \eqref{eqn:den}, with the true value of the asymmetry parameter set to be $\vect{\beta}_0=0.1$. In the present simulation study, such a dataset plays the same role as the observed data sample in a real-data analysis.  The maximum likelihood estimator $\widehat{\vect{\beta}}_{n}$ of $\vect{\beta}$ based on $\bm{x}_n$ is obtained by solving the following equation \citep{kotz2002}: 
\begin{equation}
    \begin{aligned}
& 1-\frac{2 \vect{\beta}^2}{\left(1+\vect{\beta}^2\right)} +\frac{\sqrt{2}}{\sigma}\left[\frac{1}{n\vect{\beta}} \sum_{j=1}^{n}\left(x_j-\theta\right)^{-} - \frac{\vect{\beta}}{n} \sum_{j=1}^{n}\left(x_j-\theta\right)^{+} \right]=0,
\label{eq:solve}
\end{aligned}
\end{equation}
where 
\begin{equation*}
\left(x_j-\theta\right)^{-}=-\min (0, x_j-\theta) \quad \text{and}  \quad (x_j-\theta)^{+}=\max (0, x_j-\theta).
\end{equation*}
Given the complexity of finding the solution of \eqref{eq:solve}, it is anticipated that using the projected bootstrap -- in which the estimator is obtained only once rather than recalculated on each bootstrap sample -- will expedite the simulation procedure. 

To test the hypothesis in \eqref{eqn:h0h1}, we consider the order selection statistic in \eqref{eqn:ordersel} and the subset selection statistic in \eqref{eqn:subsetsel}. 
Their null distributions are simulated via three methods: the classical parametric bootstrap, the projected bootstrap, and Monte Carlo with data generated from the true distribution. By incorporating a simulation of Monte Carlo samples, we aim to determine whether the two bootstrap procedures considered can accurately recover the true null distribution of the test statistics for a sample size of $ n = 100$. All simulations are conducted using $100,000$ replicates. The orthonormal basis functions $\{h_{j\vect{\beta}}\}_{j=1}^{M_n}$ are chosen as  compositions of the normalized shifted Legendre polynomials $h_j$ on $[0,1]$ with the CDF $G_{\vect{\beta}}$, i.e., $h_{j\vect{\beta}} = h_j\circ G_{\vect{\beta}}$. The maximum number of basis functions  $M_n$ considered is 10. 

The results of the simulation, presented in Figure \ref{fig:image2},
demonstrate that the simulated null distributions of the order selection and subset selection test statistics are consistent across all three methods considered. This indicates that both bootstrap procedures successfully recover the true null distribution of the test statistics, even with a sample size as small as 100 observations. In terms of computational efficiency, however, the projected bootstrap method is significantly faster than the classical parametric bootstrap. Specifically, 
simulating the null distributions of the order selection and subset selection statistics using 100,000 replicates required approximately 9 minutes of CPU time using the projected bootstrap, less than a fourth of the time needed by the classical parametric bootstrap, which required approximately 37 minutes.

The p-values for testing \eqref{eqn:h0h1} are also computed by comparing the value of the order selection and subset selection statistics calculated on $\bm{x}_n$
with their null distribution simulated by means of the projected bootstrap. For the order selection statistic, the p-value obtained is 0.843, and for the subset selection statistic, it amounts to 0.963. %These values are essentially indistinguishable across the three simulation approaches and provide no evidence against the null hypothesis. }

\section{Distribution-free smooth tests via K2 transform} \label{sec:4}
In general, statistics given by functionals of the process $v_{\vect{\beta},n}(\widetilde{h}_{j\vect{\beta}})$ are not distribution-free. 
Thus, while the projected bootstrap introduced in Section~\ref{sec:3} can reduce the computational burden of simulating their null distribution, a different simulation must be implemented for each model being tested. 

This section demonstrates that another route to distribution-freeness is available to practitioners. Instead of combining the values of the process $v_{\vect{\beta},n}(\widetilde{h}_{j\vect{\beta}})$ in a rather specific -- and possibly forceful -- manner to construct, at most, a few distribution-free statistics, the distribution-free property can be retrieved by ensuring that the empirical process $v_{\vect{\beta},n}(\widetilde{h}_{j\vect{\beta}})$ is itself asymptotically distribution-free under the null. Such an approach guarantees that the limiting null distribution of all its functionals is also distribution-free, thereby providing the user with an entire family of asymptotically distribution-free statistics. The tool that enables such a construction is the so-called Khmaladze-2 (K2) transform introduced by \cite{K22016}. 

In classical nonparametric goodness-of-fit testing, the probability integral transform is the transformation commonly employed to map the empirical process into the uniform empirical process, which is known to be asymptotically distributed as a standard Brownian bridge. Distribution-freeness is therefore achieved by means of a simple change of variable. One can think of the K2 transform also as a change of variable, but in functional space.  In the parametric setting, such a change of variable allows us to retrieve asymptotic distribution-freeness by mapping a wide range of different projected Brownian motions, arising when estimating different distributions, into the same ``standard'' projected Brownian motion of choice. 
Moreover, when applied in the context of smooth tests, the K2 transform implicitly guides the construction of an orthonormal basis for  $L^2(G_{\vect{\beta}})$ that guarantees the distribution-freeness of the empirical process indexed by functions in such a basis, as well as that of all its functionals.

Let $F_{\vect{\gamma}}$ be a ``reference distribution''  %defined on 
that has the same support $\mathcal{X}$ as $G_{\vect{\beta}}$. 
For the moment, assume that $\vect{\gamma}$ and $\vect{\beta}$ have the same dimension; an extension to the case where $\vect{\gamma}$ has fewer parameters than $\vect{\beta}$ will be discussed in Section \ref{sec:4.1}. The distribution $F_{\vect{\gamma}}$ constitutes the starting point of our construction and should be chosen to be simple -- that is, easy to simulate from, differentiate, and evaluate. 
Consider the Hilbert space $L^2(F_{\vect{\gamma}}) = \{ \phi: \inner{\phi,\phi}_{F_{\vect{\gamma}}}<\infty\}$,
with inner product
$$\inner{\phi,\phi'}_{F_{\vect{\gamma}}} = \int_{\mathcal{X}}\phi(x)\phi'(x)dF_{\vect{\gamma}}(x),$$
and the empirical process 
\begin{equation*}
    \begin{aligned}
    &v_{\gamma,n}(\phi) = \int_{\mathcal{X}}\phi(x)dv_{\vect{\gamma},n}(x) = 
    \frac{1}{\sqrt{n}}\sum_{i=1}^n \bigl[\phi (X_i)- \mathbb{E}_{F_{\vect{{\gamma}}}}[\phi (X_i)]
\bigl]
    \label{eqn:vF}
    \end{aligned}
    \end{equation*}
indexed by such functions. Let $\vect{a}_{\vect{\gamma}}=[a_{\gamma_1},\cdots, a_{{\gamma_p}}]^T$ be the orthonormalized score function of $F_{\vect{\gamma}}$. Denote with  
$\mathcal{L}\left(F_{\vect{\gamma}}\right)$ and $\mathcal{L}_{\perp}\left(F_{\vect{\gamma}}\right)$ the subspaces of $L^2\left(F_{\vect{\gamma}}\right)$ given by
$$\mathcal{L}\left(F_{\vect{\gamma}}\right)=\left\{ \phi \in L^2\left(F_{\vect{\gamma}}\right): \inner{\phi,\bm{1}}_{F_{\vect{\gamma}}}=0\right\}, \ \ \
\mathcal{L}_{\perp}\left(F_{\vect{\gamma}}\right)=\left\{ \widetilde{\phi} \in \mathcal{L}\left(F_{\vect{\gamma}}\right): \bigl\langle{\widetilde{\phi},\vect{a}_{\vect{\gamma}}\bigl\rangle}_{F_{\vect{\gamma}}}=0\right\}.$$
For any given orthonormal basis $\{\phi_{j\vect{\gamma}}\}_{j=1}^{\infty}$ of $L^2(F_{\vect{\gamma}})$, with elements in $\mathcal{L}(F_{\vect{\gamma}})$, the empirical process $v_{\gamma,n}$ indexed by the residuals
$$\widetilde{\phi}_{j\vect{\gamma}} = \phi_{j\vect{\gamma}} -\vect{a}^T_{\vect{\gamma}}\left\langle \vect{a}_{\vect{\gamma}},\phi_{j\vect{\gamma}}\right\rangle_{F_{\vect{\gamma}}}, $$
is asymptotically distributed, under $F_{\vect{\gamma}},$ as a projected Brownian motion with mean and covariance given by
\begin{align*}
\bigl\langle{\widetilde{\phi}_{j,\vect{\gamma}},\vect{1}\bigl\rangle}_{F_{\vect{\gamma}}}&=0\quad\text{and}\\
\bigl\langle{\widetilde{\phi}_{i\vect{\gamma}},\widetilde{\phi}_{j\vect{\gamma}}\bigl\rangle}_{F_{\vect{\gamma}}} &= \mathds{1}_{\{i=j\}}-\sum_{k=1}^p \inner{a_{\vect{\gamma_k}},\phi_{i\vect{\gamma}}}_{F_{\vect{\gamma}}}\bigl\langle{a_{\vect{\gamma_k}},\phi_{j\vect{\gamma}}\bigl\rangle}_{F_{\vect{\gamma}}},
\end{align*}
respectively.
Such a limiting distribution is the ``standard'' distribution that will be recovered through the K2 transform.
In particular, since a Gaussian process is fully characterized by its mean and covariance functions, the K2 transform enables the construction of an orthonormal basis $\{h_{j\vect{\beta}}\}_{j=1}^\infty$ for $L^2(G_{\vect{\beta}})$, hereinafter referred to as \emph{K2 orthonormal basis}, whose  residuals, $\{\widetilde{h}_{j\vect{\beta}}\}_{j=1}^\infty$ satisfy
\begin{equation*}
\begin{aligned}
\bigl\langle{\widetilde{h}_{j\vect{\beta}},\vect{1}\bigl\rangle}_{{G_{\vect{\beta}}}}= \bigl\langle{\widetilde{\phi}_{j\vect{\gamma}},\vect{1}\bigl\rangle}_{{F_{\vect{\gamma}}}} = 0 \quad  \text { and } \quad
\bigl\langle{\widetilde{h}_{j\vect{\beta}}, \widetilde{h}_{s\vect{\beta}}\bigl\rangle}_{G_{\vect{\beta}}} = \bigl\langle{\widetilde{\phi}_{j\vect{\gamma}}, \widetilde{\phi}_{s\vect{\gamma}}\bigl\rangle}_{F_{\vect{\gamma}}}.
\label{eqn:meanvar}
\end{aligned}
\end{equation*}
It follows that the processes $v_{\vect{\beta},n}(\widetilde{h}_{j\vect{\beta}})$ and $v_{\vect{\gamma},n}(\widetilde{\phi}_{j\vect{\gamma}})$, as well as all their functionals, have the same limiting distribution under $G_{\vect{\beta}}$ and $F_{\vect{\gamma}}$, respectively, thereby enabling the construction of a large class of asymptotically distribution-free statistics for testing \eqref{eqn:h0h1}.

For example, let $\widehat{\vect{\beta}}_n$ and $\widehat{\vect{\gamma}}_n$ denote the estimators of $\vect{\beta}$ and $\vect{\gamma}$, respectively.
Consider the order-selection and subset-selection statistics defined in \eqref{eqn:ordersel} and \eqref{eqn:subsetsel}, in which $\widehat{S}_{m,n}$ is chosen to be the (unnormalized) generalized score statistic in \eqref{eqn:non_norm}, with $\thjn$ corresponding to the residuals of the K2 orthonormal basis. Under $G_{\vect{\beta}}$, such statistics have the same limiting distribution as
the statistics
\begin{equation}
\begin{aligned}
    \max_{{m\in \mathcal{M}_n}} \sum_{j=1}^m\frac{ v^2_{\widehat{\vect{\gamma}}_n,n}(\widetilde{\phi}_{j\widehat{\vect{\gamma}}_n})}{m} \ \ \text{ and } \ \ 
    \max_{{B \subseteq \mathcal{M}_n:B\neq \emptyset}}\sum_{j\in B} \frac{v^2_{\widehat{\vect{\gamma}}_n,n}(\widetilde{\phi}_{j\widehat{\vect{\gamma}}_n})}{|B|},
\label{eqn:K2st2}
\end{aligned}
\end{equation}
under $F_{\vect{\gamma}}$. Since the latter can be chosen arbitrarily, the limiting null distribution of the statistics in \eqref{eqn:K2st2} can be easily simulated by means of the projected bootstrap described in Section~\ref{sec:3}. This also implies that, when testing different models, the standard limiting null distribution of the corresponding processes can be obtained using a single simulation conducted under $F_{\vect{\gamma}}$. 

The construction of the K2 orthonormal basis and validation of its properties are described in detail in Section \ref{sec:4.1}; whereas, its effectiveness in retrieving distribution-freeness in finite samples is investigated in Section \ref{sec:4.2} through a suite of simulation studies. 

\subsection{On the construction of the K2 orthonormal basis}\label{sec:4.1}

Let $l_{\vect{\gamma},\vect{\beta}}$ be the function defined by
%be the isometry from $L^2(F_{\vect{\gamma}})$ to $L^2(G_{\vect{\beta}})$ defined as:
\begin{equation*}
\begin{aligned}
l_{\vect{\gamma},\vect{\beta}}(x)= \sqrt{\frac{ f_{\vect{\gamma}}(x)}{g_{\vect{\beta}}(x)}}, \quad x\in\mathcal{X}.
\label{eqn:isometry}
\end{aligned}
\end{equation*}
The multiplication operator $\phi\mapsto l_{\vect{\gamma},\vect{\beta}}\phi$ defines an isometry from
$L^2(F_{\vect{\gamma}})$ to $L^2(G_{\vect{\beta}})$. 

It can be easily verified that for all $j$'s,
\begin{equation*} 
\inner{l_{\vect{\gamma},\vect{\beta}} \phj,\vect{1}}_{{G_{\vect{\beta}}}}\neq 0, \quad \text{and} \quad \inner{l_{\vect{\gamma},\vect{\beta}}\phj,l_{\vect{\gamma},\vect{\beta}}\phs}_{{G_{\vect{\beta}}}}=\inner{\phj,\phs}_{{F_{\vect{\gamma}}}}.
\end{equation*}
It follows that the functions $l_{\vect{\gamma},\vect{\beta}}\phj$ have the same covariance as $\phj$ under $F_{\vect{\gamma}}$, but do not have the same mean. Thus, they belong to $L^2(G_{\vect{\beta}})$ but not to its subspace $\mathcal{L}(G_{\vect{\beta}})$.  

To rectify this, consider a linear operator, $K$, such that, when applied to $l_{\vect{\gamma},\vect{\beta}}\phj$, the resulting functions have mean zero under $G_{\vect{\beta}}$ but, at the same time, their covariance is preserved, that is, 
\begin{equation}
\label{eqn:zeromean}
\inner{K l_{\vect{\gamma},\vect{\beta}}\phj,\vect{1}}_{{G_{\vect{\beta}}}}= 0, \quad 
\inner{K l_{\vect{\gamma},\vect{\beta}}\phj,K l_{\vect{\gamma},\vect{\beta}}\phs}_{{G_{\vect{\beta}}}}=\inner{\phj,\phs}_{{F_{\vect{\gamma}}}}. 
\end{equation}
The second condition in \eqref{eqn:zeromean} is satisfied for any  $K$ which is unitary. For what concerns the first condition, note that 
$$\inner{l_{\vect{\gamma},\vect{\beta}}\phj,l_{\vect{\gamma},\vect{\beta}}}_{{G_{\vect{\beta}}}}=\inner{\phj,\vect{1}}_{{F_{\vect{\gamma}}}} = 0;$$
hence, if  $Kl_{\vect{\gamma},\vect{\beta}}=\vect{1}$, then
$$\inner{K l_{\vect{\gamma},\vect{\beta}}\phj,\vect{1}}_{{G_{\vect{\beta}}}}= \inner{K l_{\vect{\gamma},\vect{\beta}}\phj,Kl_{\vect{\gamma},\vect{\beta}}}_{{G_{\vect{\beta}}}}= \inner{ l_{\vect{\gamma},\vect{\beta}}\phj,l_{\vect{\gamma},\vect{\beta}}}_{{G_{\vect{\beta}}}}=0.$$
A unitary operator $K$ satisfying the above requirements is the reflection
\begin{equation}
\begin{aligned}
K= I- 2\frac{l_{\vect{\gamma},\vect{\beta}}-\vect{1}}{||l_{\vect{\gamma},\vect{\beta}}-\vect{1}||^2}\langle l_{\vect{\beta} ,\vect{\gamma}}-\vect{1}, \ \cdot \ \rangle_{G_{\vect{\beta}}} 
&= I -\frac{l_{\vect{\gamma},\vect{\beta}}-\vect{1}}{1-\langle l_{\vect{\gamma},\vect{\beta}}, \vect{1}\rangle_{G_{\vect{\beta}}}}\langle l_{\vect{\beta} ,\vect{\gamma}}-\vect{1}, \ \cdot \ \rangle_{G_{\vect{\beta}}},
\label{eqn:Kop}
\end{aligned}
\end{equation}
where $I$ is an identity operator.
The properties of $K$ are summarized in Proposition~\ref{prop:K} and verified in Appendix~\ref{App:unitary}. 
\begin{prop}[\citealt{K22016}] 
\label{prop:K}
The operator $K$ is unitary, and satisfies $$ K l_{\vect{\gamma},\vect{\beta}}=\vect{1} \quad\text{and}\quad K \vect{1}=l_{\vect{\gamma},\vect{\beta}}.$$
Moreover, for any  $\zeta\in L^2(G_{\vect{\beta}})$ such that $\inner{\zeta, l_{\vect{\gamma},\vect{\beta}}}_{G_{\vect{\beta}}}=\inner{\zeta, \vect{1}}_{G_{\vect{\beta}}}=0$,  $K \zeta=\zeta.$  
\end{prop} 

Thus far, we have demonstrated that the functions $K l_{\vect{\gamma},\vect{\beta}} \phj\in \mathcal{L}(G_{\vect{\beta}})$ have the same mean and covariance as the functions $\phj\in \mathcal{L}(F_{\vect{\gamma}})$. Therefore, when  $\vect{\beta}$ is known, test statistics constructed as functionals of the empirical process $v_{\vect{\beta},n}(h_{j\vect{\beta}})$  are asymptotically distribution-free if  $h_{j\vect{\beta}} = K l_{\vect{\gamma},\vect{\beta}} \phj$. 

When $\vect{\gamma}$ and $\vect{\beta}$ are unknown, an additional step is necessary to map functions $\tphj\in \mathcal{L}_\perp(F_{\vect \gamma})$ into functions $\thj\in \mathcal{L}_\perp(G_{\vect \beta})$ with the same mean and covariance. For instance, if we were to naively choose $\thj$ to be 
$$\thj= K l_{\vect{\gamma},\vect{\beta}} \phj -\vect{b}^T_{\vect{\beta}}\inner{\vect{b}_{\vect{\beta}},K l_{\vect{\gamma},\vect{\beta}} \phj}_{G_{\vect{\beta}}}, $$ 
then, in general,
\begin{equation*}
\begin{aligned}
\langle \thj, \ths \rangle_{G_{\vect{\beta}}}= \inner{ \phj, \phs}_{F_{\vect{\gamma}}} - \inner{  K l_{\vect{\gamma},\vect{\beta}}\phj, \vect{b}^T_{\vect{\beta}}}_{G_{\vect{\beta}}}\inner{ \vect{b}_{\vect{\beta}}, K l_{\vect{\gamma},\vect{\beta}} \phs}_{G_{\vect{\beta}}} 
\neq
\langle\tphj,\tphs\rangle_{F_{\vect{\gamma}}}. \end{aligned}
\end{equation*}
In the above expression, equality holds, however, if 
$$\big\langle{ \vect{b}_{\vect{\beta}}, K l_{\vect{\gamma},\vect{\beta}}\phj\big\rangle}_{G_{\vect{\beta}}}  = \inner{\vect{a}_{\vect{\gamma}},\phj}_{F_{\vect{\gamma}}}$$
for all $j$'s.
This motivates the construction of an operator, $\vect{U}_p$, such that, when applied to the functions $K l_{\vect{\gamma},\vect{\beta}} \phj$, guarantees
\begin{equation} \label{eqn:au_bh}
\inner{ \vect{b}_{\vect{\beta}}, \vect{U}_p K l_{\vect{\gamma},\vect{\beta}}\phj}_{G_{\vect{\beta}}} = \inner{\vect{a}_{\vect{\gamma}},\phj}_{F_{\vect{\gamma}}}, 
\end{equation}
and at the same time preserves the mean and covariance structure, i.e., 
\begin{equation} \label{eqn:U_meanvar}
\inner{\vect{U}_p K l_{\vect{\gamma},\vect{\beta}}\phj, \vect{1}}_{G_{\vect{\beta}}} = 0, \quad \inner{\vect{U}_p K l_{\vect{\gamma},\vect{\beta}} \phj, \vect{U}_p K l_{\vect{\gamma},\vect{\beta}}\phs}_{G_{\vect{\beta}}} = \inner{\phj,\phs}_{F_{\vect{\gamma}}}.
\end{equation}
Let $c_{\vect{\lambda}_k}$ be the $k$th component of the vector  function $K l_{\vect{\gamma},\vect{\beta}} \vect{a}_{\vect{\gamma}}$, i.e., 
$$c_{\vect{\lambda}_k} = K l_{\vect{\gamma},\vect{\beta}} a_{\vect{\gamma}_k}, \quad k=1,\dots,p.$$
While any unitary operator $\vect{U}_p$ such that $\vect{U}_p\vect{1}=\vect{1}$ fulfills the requirements in \eqref{eqn:U_meanvar}, to ensure \eqref{eqn:au_bh} holds,  $\vect{U}_p$ must be choosen so that  $\vect{U}_pc_{\vect{\lambda}_k}=b_{\vect{\beta}_k}$, for each $k=1,\dots,p$. 

One could naively attempt to construct $\vect{U}_p$ as a composition of $p$ unitary operators, each mapping $c_{\vect{\lambda}_k}$ to $b_{\vect{\beta}_k}$; such an approach, however, would not lead to the desired result. To see that, consider the reflection operator on $ L^2(G_{\vect{\beta}})$ defined as 
\begin{equation*}
\begin{aligned}
U_{b_{\vect{\beta}_k} c_{\lambda_k}} &=
I- 2\frac{b_{\vect{\beta}_k}-c_{\lambda_k}}{||b_{\vect{\beta}_k}-c_{\lambda_k}||^2}\langle b_{\vect{\beta}_k}-c_{\lambda_k}, \ \cdot \ \rangle_{G_{\vect{\beta}}} \\ 
&= I -\frac{
b_{\vect{\beta}_k}-c_{\lambda_k}}{\vect{1}-\left\langle b_{\vect{\beta}_k}, c_{\lambda_k}\right\rangle_{G_{\boldsymbol{\beta}}}}\left\langle b_{\vect{\beta}_k}-c_{\lambda_k}, \ \cdot \ \right\rangle_{G_{\boldsymbol{\beta}}},  \quad k=1,\cdots,p. 
\end{aligned}
\end{equation*} 
Such an operator is self-adjoint and unitary on $L^2(G_{\vect{\beta}})$. It maps $c_{\vect{\lambda}_k}$ to $b_{\vect{\beta}_k}$ and $b_{\vect{\beta}_k}$ to $c_{\vect{\lambda}_k}$, while leaving all functions orthogonal to both $b_{\vect{\beta}_k}$ and $c_{\vect{\lambda}_k}$ unchanged. Let $p=2$, then
$$ U_{b_{\vect{\beta}_2} c_{\lambda_2}}\circ U_{b_{\vect{\beta}_1} c_{\lambda_1}}c_{\lambda_{1}}=U_{b_{\vect{\beta}_2} c_{\lambda_2}} b_{\vect{\beta}_1},$$
and in general 
$$ U_{b_{\vect{\beta}_2} c_{\lambda_2}} b_{\vect{\beta}_1}  \neq b_{\vect{\beta}_1} \quad  \text{unless} \quad\inner{c_{\lambda_2},  b_{\vect{\beta}_1}}_{G_{\vect{\beta}}}=0.$$ 

To address this issue, set $\widetilde{c}_{\lambda_1}=c_{\lambda_1}$ and a set of functions $\{\widetilde{c}_{{\lambda}_k}\}_{k=2}^p$, where each $\widetilde{c}_{{\lambda}_k}$ is constructed to be orthogonal to $\vect{1}$ and to every $b_{\vect{\beta}_j}$ for which $j\leq k-1$. Specifically, we set 
\begin{equation}
    \begin{aligned}
\widetilde{c}_{\lambda_{k}}&=U_{b_{\vect{\beta}_{k-1}}\widetilde{c}_{\lambda_{k-1}}} \circ \cdots  \circ U_{b_{\vect{\beta}_1} \widetilde{c}_{\lambda_1}} c_{\lambda_{k}}.
\label{eqn:tildec}
\end{aligned}
\end{equation}
A proof that each $\widetilde{c}_{\lambda_{k}}$ satisfies the required orthogonality conditions is provided in Appendix~\ref{App:orthor}. 

Now, define the operator $\vect{U}_p$ as 
\begin{equation*}
    \begin{aligned}
\vect{U}_p =U_{b_{\vect{\beta}_p} \widetilde{c}_{\lambda_p}} \circ \cdots  \circ U_{b_{\vect{\beta}_1} \widetilde{c}_{\lambda_1}}.
\label{eqn:Ud}
\end{aligned}
\end{equation*}
Since $\vect{U}_p$ is a composition of unitary operators on $L^2\left(G_{\vect{\beta}}\right)$, it is itself unitary on $L^2\left(G_{\vect{\beta}}\right)$. Moreover, 
\begin{equation*}
    \vect{U}_{p}c_{\lambda_k} = U_{b_{\vect{\beta}_p} \widetilde{c}_{\lambda_p}} \circ \cdots \circ U_{b_{\vect{\beta}_{k}} \widetilde{c}_{\lambda_{k}}} \widetilde{c}_{\lambda_{k}} = U_{b_{\vect{\beta}_p} \widetilde{c}_{\lambda_p}} \circ \cdots \circ U_{b_{\vect{\beta}_{k+1}} \widetilde{c}_{\lambda_{k+1}}} b_{\vect{\beta}_k} = b_{\vect{\beta}_k},
\end{equation*}
thus, $\vect{U}_{p}$ maps $ c_{\lambda_k} $ into $b_{\vect{\beta}_k}$, for each $k=1,\dots,p$ and, since all the functions $b_{\vect{\beta}_k}$ and $\widetilde{c}_{\lambda_k}$
are orthogonal to
$\vect{1}$, $\vect{U}_{p}\vect{1}=\vect{1}$. These properties are formalized in Proposition~\ref{prop:Up}. 

\begin{prop} \label{prop:Up}
The operator $\vect{U}_p$ is unitary on $L^2\left(G_{\vect{\beta}}\right)$ and satisfies
$$
\vect{U}_p\vect{1}=\vect{1},
\ \ \text {and } \ \  \vect{U}_{p}c_{\lambda_k} = b_{\vect{\beta}_k}, \quad k=1,\dots,p. $$
\end{prop}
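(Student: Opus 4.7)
The plan is to verify the three claims separately, using the fact that $\vect{U}_p$ is built as a composition of Householder-type reflections and that the auxiliary functions $\widetilde{c}_{\lambda_k}$ have been engineered precisely so that each reflection does not undo the work of the previous ones.

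First, for the unitary property, I would note that each factor $U_{b_{\vect{\beta}_k} \widetilde{c}_{\lambda_k}}$ is an explicit reflection of the form $I - \frac{v}{\|v\|^2/2}\langle v, \cdot\rangle_{G_{\vect{\beta}}}$ with $v = b_{\vect{\beta}_k} - \widetilde{c}_{\lambda_k}$ (and $\widetilde{c}_{\lambda_1} := c_{\lambda_1}$). A short direct computation of $\langle U_{b_{\vect{\beta}_k} \widetilde{c}_{\lambda_k}} \phi, U_{b_{\vect{\beta}_k} \widetilde{c}_{\lambda_k}} \phi'\rangle_{G_{\vect{\beta}}}$, using that $b_{\vect{\beta}_k}$ and $\widetilde{c}_{\lambda_k}$ are unit-norm in $L^2(G_{\vect{\beta}})$, shows each factor is self-adjoint and isometric on $L^2(G_{\vect{\beta}})$, hence unitary. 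Unitarity of $\vect{U}_p$ then follows because the composition of unitary operators is unitary.

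Second, for $\vect{U}_p \vect{1} = \vect{1}$, the point is that every reflection $U_{b_{\vect{\beta}_k}\widetilde{c}_{\lambda_k}}$ fixes any function orthogonal (in $L^2(G_{\vect{\beta}})$) to both $b_{\vect{\beta}_k}$ and $\widetilde{c}_{\lambda_k}$; by the identity \eqref{eqn:Kop}-style expansion, this is immediate from $\langle b_{\vect{\beta}_k}-\widetilde{c}_{\lambda_k},\vect{1}\rangle_{G_{\vect{\beta}}}=0$. That inner product vanishes because $b_{\vect{\beta}_k}\in\mathcal{L}(G_{\vect{\beta}})$ by definition of the orthonormalized score, and $\widetilde{c}_{\lambda_k}\in\mathcal{L}(G_{\vect{\beta}})$ by its construction (which is the content of Appendix~\ref{App:orthor}, and which I would cite). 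Applying this to each of the $p$ factors in turn gives $\vect{U}_p \vect{1}=\vect{1}$.

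Third, for the identity $\vect{U}_p c_{\lambda_k} = b_{\vect{\beta}_k}$, I would proceed by induction on $k$. For the inner factors, I use the intermediate identity $U_{b_{\vect{\beta}_{k-1}}\widetilde{c}_{\lambda_{k-1}}} \cdots U_{b_{\vect{\beta}_1}c_{\lambda_1}} c_{\lambda_k} = \widetilde{c}_{\lambda_k}$, which is exactly the defining recursion \eqref{eqn:tildec}. Then applying $U_{b_{\vect{\beta}_k}\widetilde{c}_{\lambda_k}}$ sends $\widetilde{c}_{\lambda_k}$ to $b_{\vect{\beta}_k}$ by the swap property of a reflection through $b_{\vect{\beta}_k}-\widetilde{c}_{\lambda_k}$. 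What remains is to verify that the subsequent reflections $U_{b_{\vect{\beta}_{k+1}}\widetilde{c}_{\lambda_{k+1}}}, \ldots, U_{b_{\vect{\beta}_p}\widetilde{c}_{\lambda_p}}$ all fix $b_{\vect{\beta}_k}$.

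This last step is where the real content lies and is the main obstacle. A reflection $U_{b_{\vect{\beta}_j}\widetilde{c}_{\lambda_j}}$ fixes $b_{\vect{\beta}_k}$ (for $j>k$) precisely when $b_{\vect{\beta}_k}\perp (b_{\vect{\beta}_j}-\widetilde{c}_{\lambda_j})$ in $L^2(G_{\vect{\beta}})$. Since the $b_{\vect{\beta}_k}$ are orthonormal among themselves, this reduces to showing $\langle b_{\vect{\beta}_k}, \widetilde{c}_{\lambda_j}\rangle_{G_{\vect{\beta}}}=0$ for all $j>k$. That is exactly the orthogonality condition built into the recursion \eqref{eqn:tildec} and verified in Appendix~\ref{App:orthor}: each $\widetilde{c}_{\lambda_j}$ is orthogonal to $b_{\vect{\beta}_1},\ldots,b_{\vect{\beta}_{j-1}}$. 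I would therefore cite this appendix and close the induction, delivering $\vect{U}_p c_{\lambda_k} = b_{\vect{\beta}_k}$ for every $k=1,\dots,p$ and thus completing the proof.
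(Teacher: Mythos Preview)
Your proposal is correct and follows essentially the same route as the paper: unitarity by composition of Householder reflections, $\vect{U}_p\vect{1}=\vect{1}$ because each $b_{\vect{\beta}_k}$ and $\widetilde{c}_{\lambda_k}$ is orthogonal to $\vect{1}$, and $\vect{U}_p c_{\lambda_k}=b_{\vect{\beta}_k}$ via the recursion \eqref{eqn:tildec}, the swap property, and the orthogonality conditions of Appendix~\ref{App:orthor}. Your write-up is simply more explicit than the paper's, which compresses the same three steps into a few lines of text immediately preceding the proposition.
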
 
From Proposition~\ref{prop:Up}, it follows that each function $\vect{U}_{p}K l_{\vect{\gamma},\vect{\beta}} \phj$ fulfills \eqref{eqn:au_bh}, thus:
\begin{equation*}
\begin{aligned}
\vect{U}_{p}Kl_{\vect{\gamma},\vect{\beta}} \tphj&= \vect{U}_{p} Kl_{\vect{\gamma},\vect{\beta}}\left[\phj-\vect{a}^T_{\vect{\gamma}}\inner{\vect{a}_{\vect{\gamma}}, \phj}_{F_{\vect{\gamma}}}\right]  \\
&=\vect{U}_{p}K l_{\vect{\gamma},\vect{\beta}} \phj-\vect{b}^T_{\vect{\beta}}\inner{\vect{b}_{\vect{\beta}}, \vect{U}_{p}K l_{\vect{\gamma},\vect{\beta}} \phj}_{G_{\vect{\beta}}}. 
\end{aligned}
\end{equation*}
% Moreover, since the operators $\vect{U}_{p}, K,$ and $l_{\vect{\gamma},\vect{\beta}}$ are unitary, the set of functions $\{\vect{U}_{p}K l_{\vect{\gamma},\vect{\beta}} \phj\}_{j=1}^\infty$ form an orthonormal basis for $L^2(G_{\vect{\beta}})$, and thus we can set
% \begin{equation*}
% \hj = \vect{U}_{p}K l_{\vect{\gamma},\vect{\beta}} \phj, \quad \text{and} \quad \thj = \vect{U}_{p}K l_{\vect{\gamma},\vect{\beta}} \tphj. 
% \end{equation*}
% From the above construction, the mean and covariance of the functions $\thj$ under $G_{\vect{\beta}}$ are the same as those of  $\tphj$ under $F_{\vect{\gamma}}$. Thus, the empirical processes $v_{G,n}(\thj)$ and $v_{F,n}(\tphj)$ have the same standard limiting distribution. These results are summarized in Proposition~\ref{prop:6}. 

We now have access to all the elements needed to define the K2 orthonormal basis at the core of the proposed distribution-freeness construction:
\begin{prop} \label{prop:6}
The set of functions $\{\vect{U}_{p}K l_{\vect{\gamma},\vect{\beta}} \phj\}_{j=1}^\infty$ forms an orthonormal basis for $L^2(G_{\vect{\beta}})$. 
Define, for each $j \geq 1, $ 
\begin{equation*}
\hj = \vect{U}_{p}K l_{\vect{\gamma},\vect{\beta}} \phj, \quad \text{and} \quad \thj = \vect{U}_{p}K l_{\vect{\gamma},\vect{\beta}} \tphj.
\end{equation*}
Then, the empirical processes $v_{\vect{\beta},n}(\thj)$ and $v_{\vect{\gamma},n}(\tphj)$ have the same limiting distribution under $G_{\vect{\beta}}$ and $F_{\vect{\gamma}}$, respectively.
\end{prop} 
\begin{proof}
The first claim follows from the unitary nature of the operators $\vect{U}_{p}$, $K$, and $l_{\vect{\gamma},\vect{\beta}}$.  
The second claim is a direct consequence of the asymptotic Gaussianity of $v_{\vect{\beta},n}(\thj)$ and $v_{\vect{\gamma},n}(\tphj)$ and the construction of the $\{\thj\}_{j=1}^\infty$ basis, which ensures that the empirical process indexed by such functions has, under $G_{\vect{\beta}}$, the same mean and covariance as the empirical process indexed by functions $\{\tphj\}_{j=1}^\infty$ under $F_{\vect{\gamma}}$.
\end{proof}

%Proposition~\ref{prop:6} estabilishes that the composition of $\vect{U}_p, K$ and $l_{\vect{\gamma},\vect{\beta}}$ applied to $\{\phi_{j\vect{\gamma}}\}_{j=1}^\infty$  yields an orthonormal basis for $L^2(G_{\vect{\beta}})$ and guarantees asymptotic equivalence between  $v_{\vect{\beta},n}(\thj)$ and $v_{\vect{\gamma},n}(\tphj)$. %Hence, 
%any test statistic constructed as a functional of  $v_{\vect{\beta},n}(\thj)$ has the same limiting null distribution as the same functional from $v_{\vect{\gamma},n}(\tphj)$. Regardless of the model $G_{\vect{\beta}}$ being tested, such a distribution can therefore be simulated from the arbitrarily chosen reference distribution $F_{\vect{\gamma}}$.

The approach described above can be generalized to cases where the dimension of $\vect{\gamma}$ is smaller than $p$, say $s$. In this setting, one can simply expand the orthonormal set of score functions $\{a_{\vect{\gamma}_k}\}_{k=1}^s$ to a larger orthonormal set $\{a_{\vect{\gamma}_k}\}_{k=1}^p$ in $L^2(F_{\vect{\gamma}})$, ensuring that all elements remain orthogonal to each other and the constant function $\vect{1}$. This extension can be accomplished, for instance, by selecting $p-s$ additional functions from another orthonormal basis in $L^2(F_{\vect{\gamma}})$ outside the span of $\{\vect{1},a_{\vect{\gamma}_1},\dots,a_{\vect{\gamma}_s}\}$, and applying the Gram-Schmidt orthogonalization procedure.

%While at first glance the above steps may appear rather burdensome, note that all operators involved in the K2 transform are linear. Moreover, to test the hypotheses in \eqref{eqn:h0h1}, test statistics based on the K2-orthonormal basis need to be evaluated only once on the observed data for each hypothesized model $G_{\vect{\beta}}$ being tested. 

\subsection{Simulation Studies} \label{sec:4.2}

\begin{figure}[t]
    \centering
\includegraphics[width=0.7\textwidth]{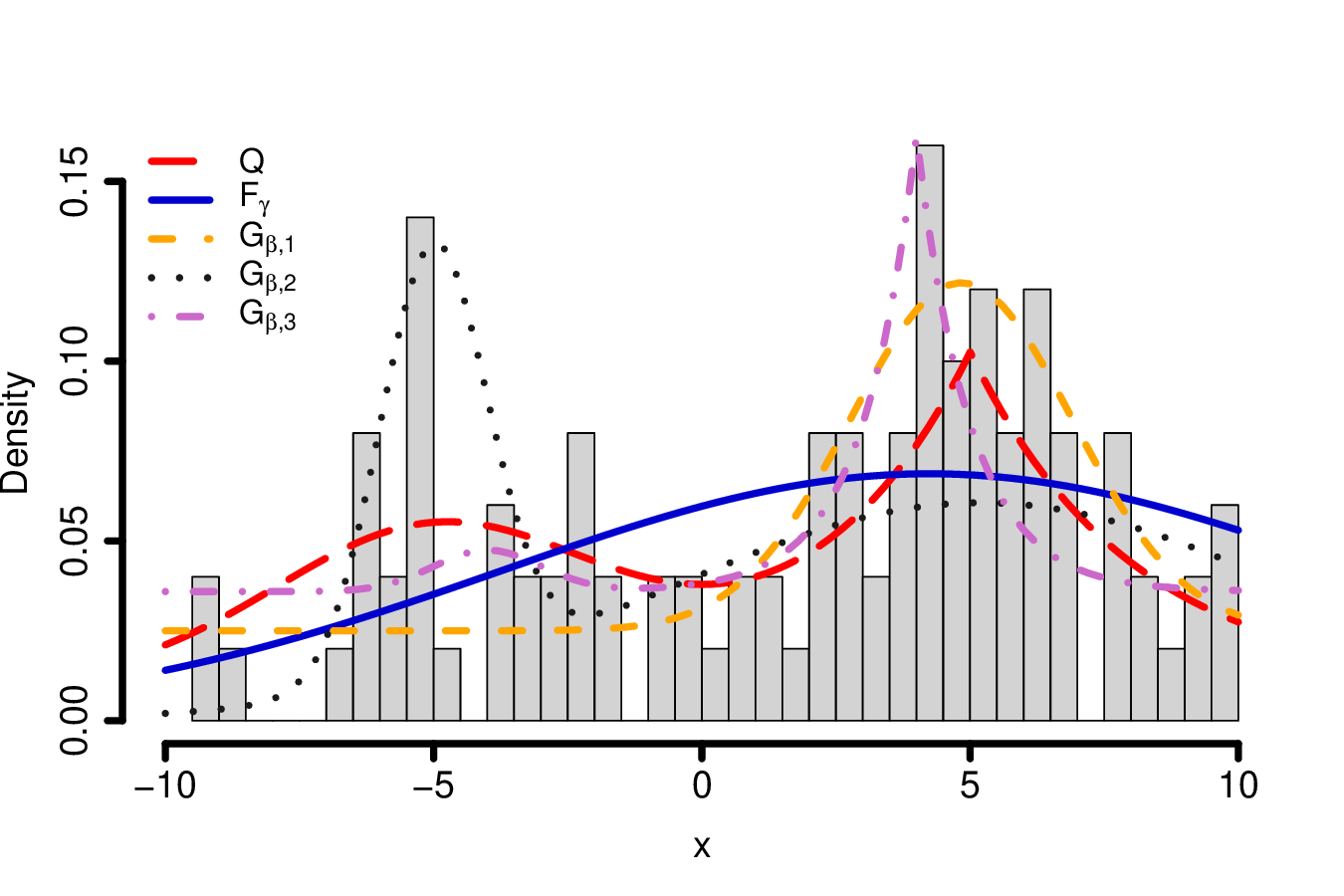}
\caption{The histogram of the simulated dataset is shown together with the densities of the true data-generating model $Q$, the reference distribution $F_{\vect{\gamma}}$, and the hypothesized distributions $G_{\vect{\beta},1}, G_{\vect{\beta},2}, G_{\vect{\beta},3}$. The unknown parameters $\vect{\beta}$ and $\vect{\gamma}$ are estimated via maximum likelihood. }
 \label{fig:histogram1}
\end{figure}

Consider a dataset of $n = 100$ observations generated from a distribution $Q$ with density 
\begin{equation*}
    \begin{aligned}
q(x)= 0.3 d_1(x; \mu_1, \sigma_1) + 0.5 d_2(x; \mu_2, \sigma_2) + 0.2 d_3(x), \quad x \in \mathcal{X}=[-10,10], 
    \end{aligned}
\end{equation*}
where $d_1$ and $d_2$ denote, respectively, the densities of truncated normal and truncated Laplace random variables; $d_3$ is the uniform density. The values of the location parameters are $\mu_1=-5$, $\mu_2 =5$ and scale parameters are $\sigma_1=\sigma_2= 3$. We aim to test the hypothesis in \eqref{eqn:h0h1} for three different specifications of the null density, i.e.,
\begin{equation*}
    \begin{aligned}
     g_{\vect{\beta},1}(x) & = 0.5 d_1(x;\beta_1, \beta_2) + 0.5 d_3(x); \\
     g_{\vect{\beta},2}(x) &= 0.3 d_1(x;-5,1) + 0.7 d_1(x;\beta_1,\beta_2);\\
     g_{\vect{\beta},3}(x) &= \beta_1 d_1(x; -4, 1) + \beta_2 d_2(x; 4, 1) +(1-\beta_1-\beta_2) d_3(x), 
\end{aligned}
\end{equation*}
where $\vect{\beta}=(\beta_1, \beta_2)$ is the unknown parameter vector to be estimated. 
Their corresponding CDFs are denoted by $G_{\vect{\beta},1}$, $G_{\vect{\beta},2}$, and $G_{\vect{\beta},3}$, respectively. The reference distribution, $F_{\vect{\gamma}}$, is chosen to be a truncated normal distribution over $\mathcal{X}$ with unknown parameter $\vect{\gamma}$ corresponding to its mean and variance. Figure~\ref{fig:histogram1} shows the histogram of the dataset considered, along with the densities of $q, g_{\vect{\gamma}}, g_{\vect{\beta},1}, g_{\vect{\beta},2},$ and $g_{\vect{\beta},3}$ estimated via maximum likelihood.

\begin{figure}[t]
    \centering
    \includegraphics[width=0.48\textwidth]{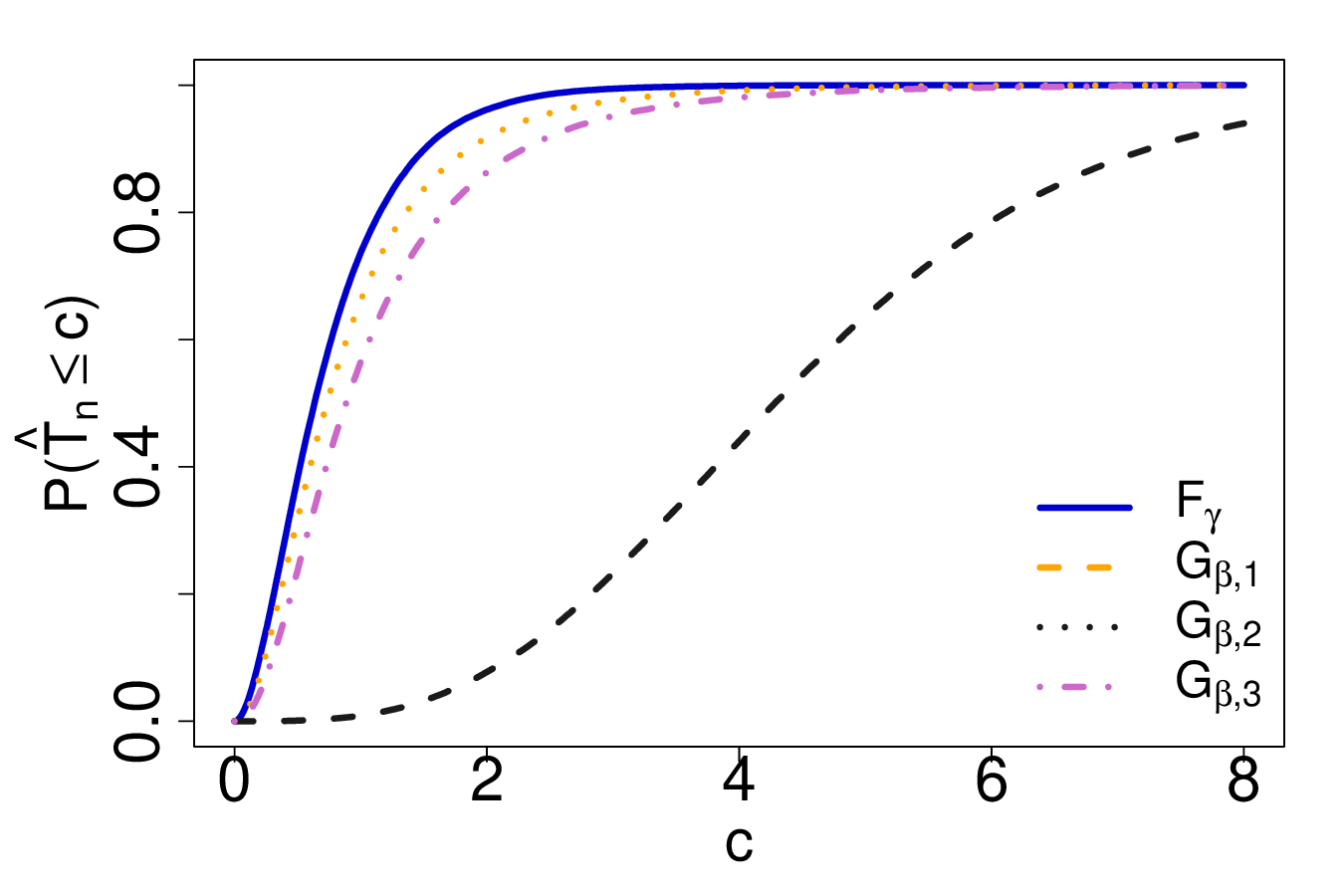}
    \includegraphics[width=0.48\textwidth]{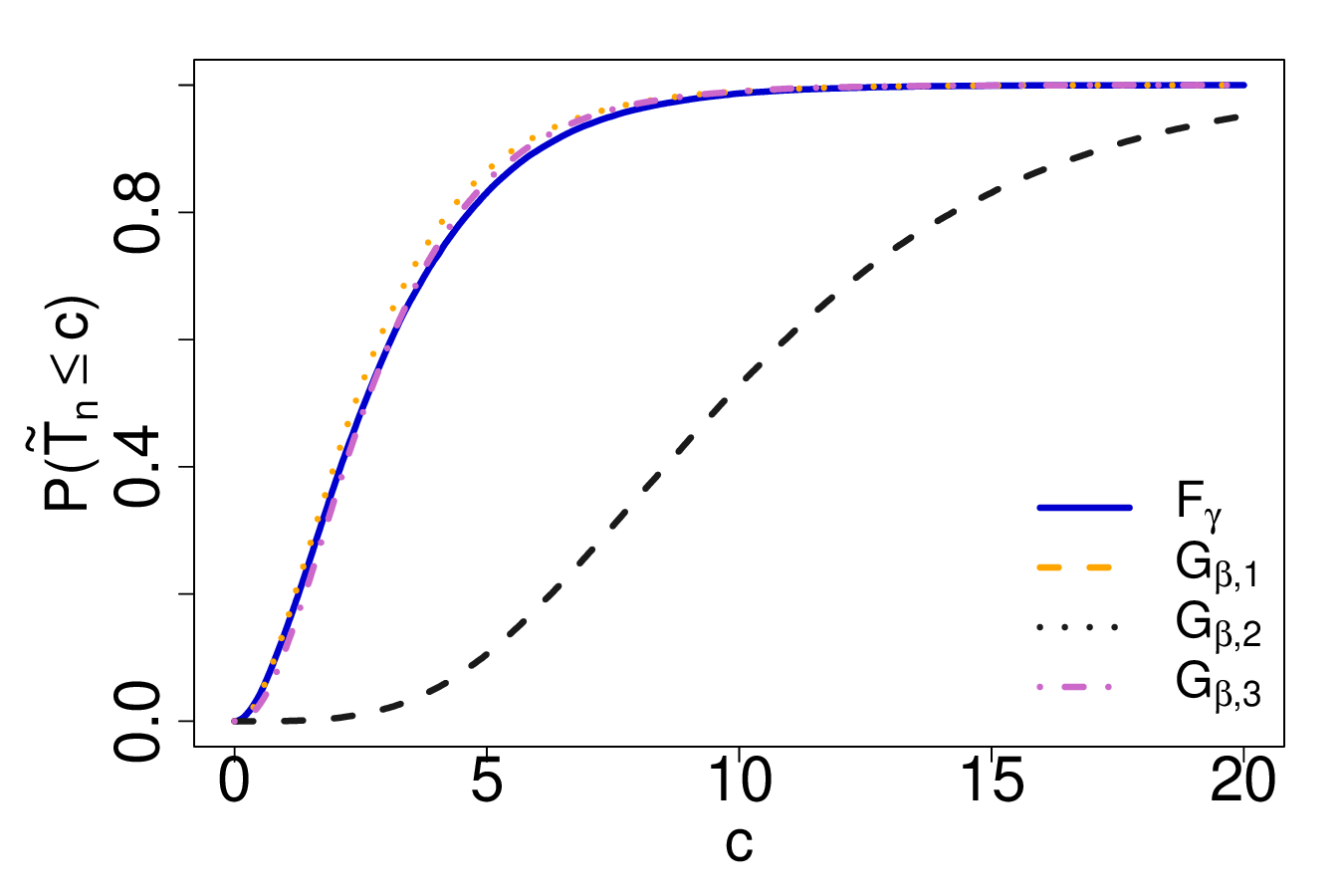} 
    \includegraphics[width=0.48\textwidth]{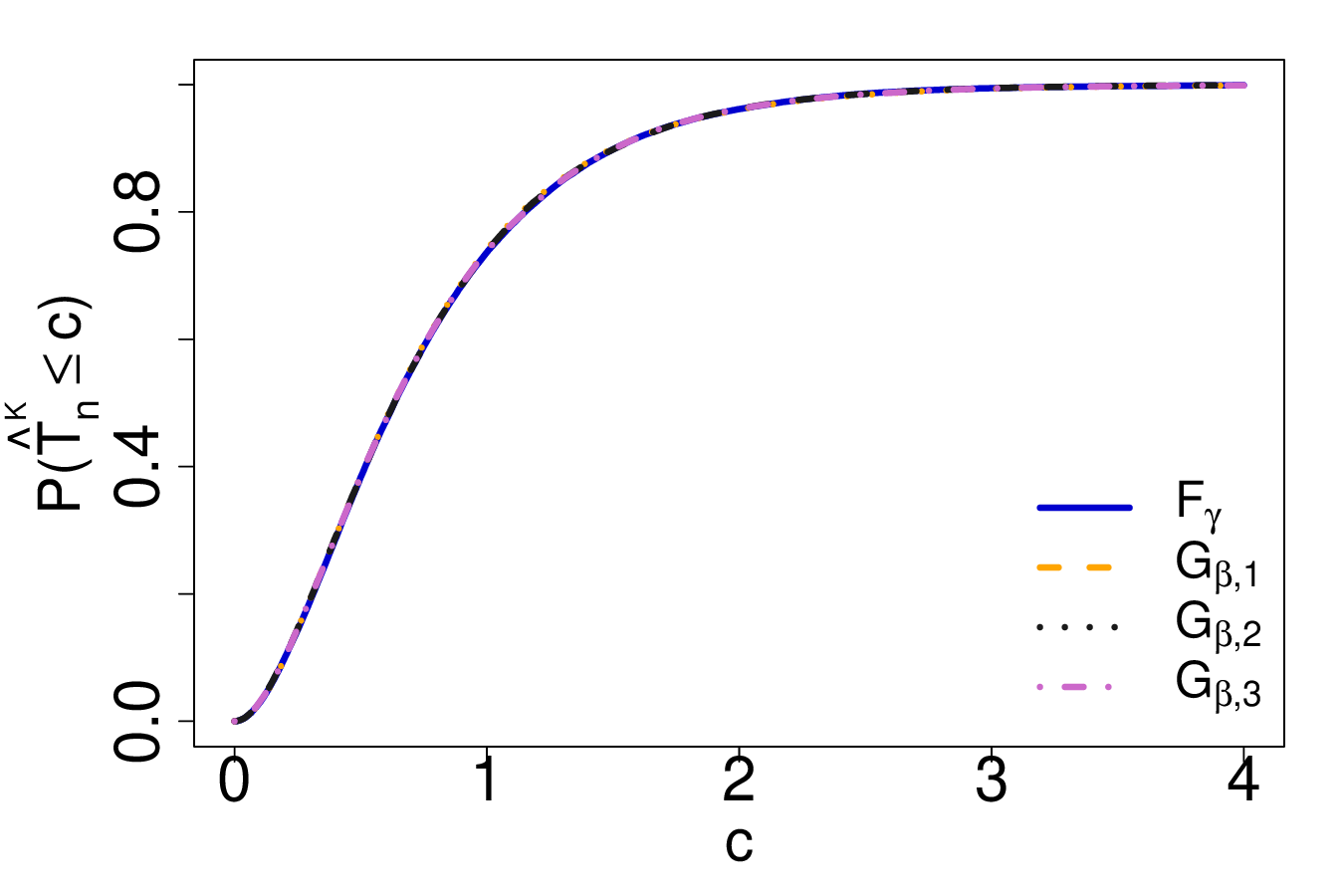} %
    \includegraphics[width=0.48\textwidth]{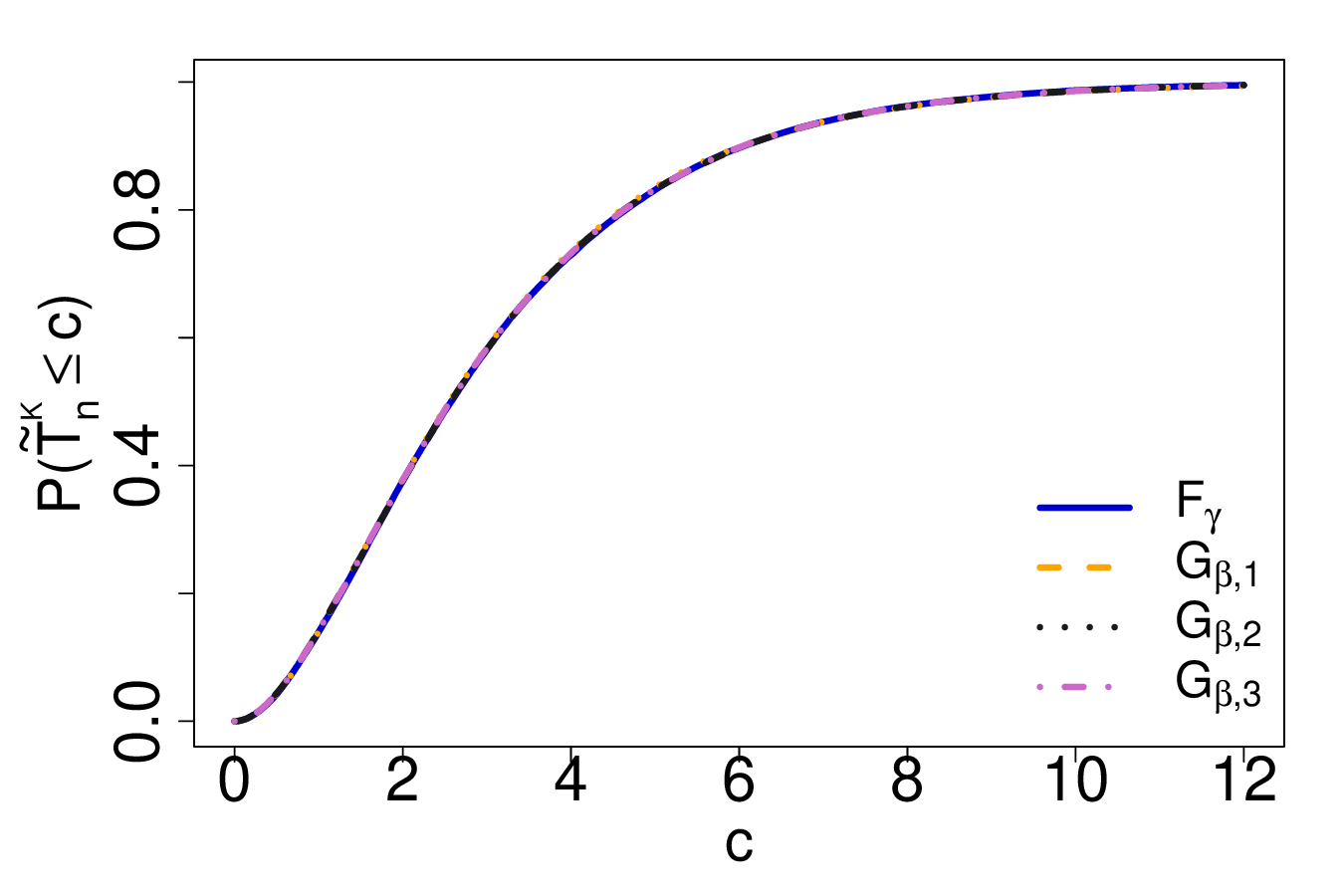}%
    \\[-0.7em]
    \caption{The simulated null distributions of the order selection statistics (left panels) and the subset selection test statistics (right panels), using basis functions obtained by composing the normalized shifted Legendre polynomials with the null CDFs (upper panels) and the K2 transform (lower panels), 
    under $F_{\vect{\gamma}}$, $G_{\vect{\beta},1}$, $G_{\vect{\beta},2}$, and $G_{\vect{\beta},3}$. }
\label{fig:image3}
\end{figure}

Consider the case in which the basis functions used in \eqref{eqn:alter} are constructed as compositions of the normalized shifted Legendre polynomials on [0,1] and the CDF under the null, i.e., 
\begin{equation}
\begin{split}
h_{1,\beta}(x)&=\sqrt{12}[G_{\vect{\beta},s}(x)-0.5],\\
h_{2,\beta}(x)&=\sqrt{5}[6G^2_{\vect{\beta},s}(x)-6G_{\vect{\beta},s}(x)+1],\\
h_{3,\beta}(x)&=\sqrt{7}[20G^3_{\vect{\beta},s}(x)-30G^2_{\vect{\beta},s}(x)+12G_{\vect{\beta},s}(x)-1],\\
h_{4,\beta}(x)&=3[70G^4_{\vect{\beta},s}(x)-140G^3_{\vect{\beta},s}  +90G^2_{\vect{\beta},s}(x)-20G_{\vect{\beta},s}(x)+1],\quad\text{etc.}
\label{eqn:legendre}
\end{split}
\end{equation}
with $s=1,2,3$ when testing $G_{\vect{\beta},1}$, $G_{\vect{\beta},2}$, and $G_{\vect{\beta},3}$, respectively. 
For each of the three null models considered,  we simulated the null distributions of the statistics $\widehat{T}_n$ in \eqref{eqn:ordersel} and $\widetilde{T}_n$ in \eqref{eqn:subsetsel} constructed using the above-mentioned sets of basis functions, up to $M_n=6$, and with $\widehat{S}_{m,n}$ chosen to be the unnormalized generalized score statistic in \eqref{eqn:non_norm}. The simulation was performed using the projected bootstrap, described in Section \ref{sec:3.1}, with 100,000 replicates. For comparison, the distribution of the statistics in \eqref{eqn:K2st2} under $F_{\vect{\gamma}}$ has also been simulated following the same sampling scheme and choosing the basis functions $\phi_j$ to be a composition of normalized shifted Legendre polynomials on $[0,1]$ and  $F_{\vect{\gamma}}$; hence, for $j=1,\dots,4$ they correspond to the right-hand sides of the equations in \eqref{eqn:legendre} with $F_{\vect{\gamma}}$ in place of $G_{\vect{\beta},s}$. The resulting simulated null distributions are shown in the upper panels of Figure~\ref{fig:image3}, where the blue solid curve corresponds to the statistics under $F_{\vect{\gamma}}$, and the yellow dotted, black dashed, and purple dash-dotted curves correspond to those constructed under  $G_{\vect{\beta},1}$, $G_{\vect{\beta},2}$, and $G_{\vect{\beta},3}$, respectively.
As shown in the upper-left panel of Figure~\ref{fig:image3}, the null distributions of the order selection statistic $\widehat{T}_n$ under $G_{\vect{\beta},1}$, $G_{\vect{\beta},2},$ and $G_{\vect{\beta},3}$ differs substantially from that constructed under $F_{\vect{\gamma}}$, and given by the first expression in \eqref{eqn:K2st2}. 
In the case of the subset selection test statistic $\widetilde{T}_n$, the null distributions under $G_{\vect{\beta},1}$, and $G_{\vect{\beta},3}$ are rather similar. They also match the null distribution of the same statistic, constructed under $F_{\vect{\gamma}}$, and given by the right expression in \eqref{eqn:K2st2}. However, all these distributions differ substantially from that of $\widetilde{T}_n$ under $G_{\vect{\beta},2}$. These discrepancies are expected given that, in general, the unnormalized generalized score statistic in \eqref{eqn:non_norm} is not distribution-free.

\begin{table*}[t]
\begin{center}
\small	
\setlength{\tabcolsep}{4pt}
\begin{tabular}{|c|cccc|cccc|cccc|}
\hline
\multirow{2}{*}{}
 & \multicolumn{4}{c|}{$\alpha=0.001$}
 & \multicolumn{4}{c|}{$\alpha=0.05$}
 & \multicolumn{4}{c|}{$\alpha=0.1$} \\ 
\cline{2-13}
 & \rule{0pt}{1.4em} $\widehat{T}_n$ & $\widetilde{T}_n$ & $\widehat{T}^{K}_n$ & $\widetilde{T}^{K}_n$
 & $\widehat{T}_n$ & $\widetilde{T}_n$ & $\widehat{T}^{K}_n$ & $\widetilde{T}^{K}_n$
 & $\widehat{T}_n$ & $\widetilde{T}_n$ & $\widehat{T}^{K}_n$ & $\widetilde{T}^{K}_n$ \\
\hline
$F_{\vect{\gamma}}$
 & 0.859 & 0.851 &  --  &  --
 & 1.000 & 1.000 &  --  &  --
 & 1.000 & 1.000 &  --  &  --  \\
\hline
$G_{\vect{\beta},1}$
 & 0.008 & 0.091 & 0.053 & 0.025
 & 0.351 & 0.551 & 0.519 & 0.333
 & 0.533 & 0.688 & 0.662 & 0.473 \\
\hline
$G_{\vect{\beta},2}$
 & 0.016 & 0.038 & 0.643 & 0.695
 & 0.238 & 0.424 & 0.967 & 0.969
 & 0.357 & 0.566 & 0.985 & 0.986 \\
\hline
$G_{\vect{\beta},3}$
 & 0.007 & 0.028 & 0.037 & 0.035
 & 0.164 & 0.285 & 0.371 & 0.351
 & 0.286 & 0.402 & 0.498 & 0.482 \\
\hline
\end{tabular}
\end{center}
\caption{Power comparison between classical order selection and subset selection test 
statistics with their K2-transformed counterparts. The null distributions are $G_{\vect{\beta},1}, G_{\vect{\beta},2}, G_{\vect{\beta},3}$; the reference distribution is $F_{\vect{\gamma}}$; the true data-generating distribution is $Q$; and significance levels are set at $0.001, 0.05,$ and $0.1$.}
\label{tab:1}
\end{table*}

To assess the impact of the K2 transform, we repeated the same experiment but for the case in which the basis functions used in \eqref{eqn:alter} when testing $G_{\vect{\beta},1}$, $G_{\vect{\beta},2}$, $G_{\vect{\beta},3}$ are the K2-transformed basis functions, obtained as described in Section~\ref{sec:4.1}, and corresponding to the images of the K2 transform applied to the basis functions $\phi_j$ given by compositions of Legendre polynomials and $F_{\vect{\gamma}}$. In what follows, we denote the order selection and subset selection statistics constructed using the K2-transformed basis functions by $\widehat{T}^{K}_n$ and $\widetilde{T}^{K}_n$, respectively. As shown in the lower panels of Figure~\ref{fig:image3}, for both such statistics, the simulated null distributions under $G_{\vect{\beta},1}$, $G_{\vect{\beta},2}$, and $G_{\vect{\beta},3}$ are  indistinguishable from the distribution of the corresponding statistics in \eqref{eqn:K2st2} simulated under $F_{\vect{\gamma}}$. It follows that, for all models and statistics considered in this example, a sample size of 100 observations is sufficient to retrieve the distribution-free property even when two parameters are estimated. 

To evaluate the performance of the testing procedure in \eqref{eqn:h0h1} under different choices of basis functions, we computed the simulated power using both order selection and subset selection statistics and calculated it as the proportion of bootstrap replications under $Q$ for which the value of these statistics  exceeds the critical value obtained from the simulated null distribution at the prescribed significance levels. 
%The simulated null distributions of $\widehat{T}^{K}_n$ and $\widetilde{T}^{K}_n$ were obtained by sampling the statistics in \eqref{eqn:K2st2} from the reference distribution $F_{\vect{\gamma}}$, whereas those of $\widehat{T}_n$ and $\widetilde{T}_n$ were sampled their respective null distributions. 
Table~\ref{tab:1} presents the resulting power when testing $G_{\vect{\beta},1}, G_{\vect{\beta},2}, G_{\vect{\beta},3}$ using the statistics $\widehat{T}_n$, $\widetilde{T}_n$, $\widehat{T}^{K}_n$, and $\widetilde{T}^{K}_n$  at significance levels $\alpha=0.001, 0.05,$ and $0.1$. The power when testing the reference distribution $F_{\vect{\gamma}}$ using $\widehat{T}_n$ and $\widetilde{T}_n$ is also reported. When testing $G_{\vect{\beta}_2}$ and $G_{\vect{\beta}_3}$, the K2-based test statistics $\widehat{T}^{K}_n$ and $\widetilde{T}^{K}_n$ exhibit substantially higher power than $\widehat{T}_n$ and $\widetilde{T}_n$. Notably, when testing $G_{\vect{\beta},2}$ at a significance level of $\alpha=0.001$, $\widetilde{T}_n^{K}$ attains power as high as 0.695, whereas $\widetilde{T}_n$ achieves only 0.038. When testing $G_{\vect{\beta}_1}$, $\widehat{T}^{K}_n$ shows higher power than $\widehat{T}_n$ while the power of $\widetilde{T}^{K}_n$ is  lower than that of $\widetilde{T}_n$. This reflects the fact that K2-transformed statistics can exhibit higher power against certain alternatives, though such improvements in power should not be assumed to be true in general.  %That is, for some alternatives, we may expect $\widehat{T}_n$ and $\widetilde{T}_n$ to have higher power than their K2-transformed counterparts $\widehat{T}^K_n$ and $\widetilde{T}^K_n$.

\section{Case study: analyzing an X-ray spectrum from RT Cru}  \label{sec:5}
In X-ray astronomy, spectral analysis is essential for understanding the fundamental properties of stars, galaxies, and other celestial objects. In particular, the presence of spectral lines in X-ray spectra provides valuable insights into an object’s chemical composition, distance from Earth, temperature, motion, surrounding environments, and other important attributes.

Here, we focus on the study of a high-resolution spectrum from the star RT Cru and obtained in November 2015 by the \textit{Chandra} X-ray Observatory \citep{swartz}. RT Cru is of particular astronomical significance because it belongs to the rare class of X-ray-emitting symbiotic systems --  crucial for studying Type Ia supernovae\footnote{A Type Ia supernova is a powerful nuclear explosion that occurs when a small, dense star called a white dwarf gathers too much mass from a nearby companion star. Once it exceeds its stability limit, the white dwarf undergoes an uncontrollable burst of nuclear fusion.} and, more broadly, investigating the expansion of the Universe. 
In \cite{zhang2023}, smooth tests were primarily employed to assess the departure from uniformity in a background-only spectrum, that is, a spectrum in which no spectral lines are present. Likelihood ratio tests were then used to test for the presence of spectral lines in a spectrum that was known to contain at least three spectral lines in the wavelength region between 1.65 Å and 2.05 Å (where 1Å$= 10^{-10}$m). 
Focusing on the latter set of data, here we use the methodology described in Section \ref{sec:4} to assess the validity of the parametric model: 
\begin{equation}
    \begin{aligned}
g_{\vect{\beta}}(x) =  (1- \beta_{1}-\beta_{2}-\beta_{3}) b(x) + \sum\limits_{r=1}^3 \beta_r s_{r}(x), \quad x\in  \mathcal{X},
    \label{eqn:trueg}
    \end{aligned}    
\end{equation}
where $\mathcal{X}=[1.65,2.05]$; $b(x)$ is a uniform background density on $\mathcal{X}$; and the functions $s_r(x)$ model each of the the three expected spectral lines. They consist of a convolution of a normal density with a Moffat function\footnote{The Moffat function is equivalent to the density of a location-scale Student’s $t$-distribution with a location of 0, a scale of 0.025, and 4 degrees of freedom. } \citep[][]{1969moffat} and specify as 
\begin{equation}
   \begin{aligned}
s_r\left(x\right) \propto \int_{-\infty}^{+\infty}  \frac{\exp \left\{-\frac{1}{2}\left(\frac{w_0-\mu_r}{\sigma_r}\right)^2\right\}}{\left[1+\left(\frac{x-w_0}{0.05}\right)^2\right]^{2.5}} d w_0, \quad x\in\mathcal{X},
   \end{aligned}    
   \label{eq:LRF}
\end{equation}
where $\mu_r, \sigma_r$ are known parameters, with $\mu_1 = 1.78499, \mu_2 = 1.85247$, and $\mu_3 = 1.94365$, and $\sigma_1=\sigma_2=\sigma_3 = 0.0025$. The unknown parameters $\beta_{r}$ represent the relative intensities of the spectral lines and are estimated via maximum likelihood.

\begin{figure}%
    \centering
    \includegraphics[width=0.48\textwidth]{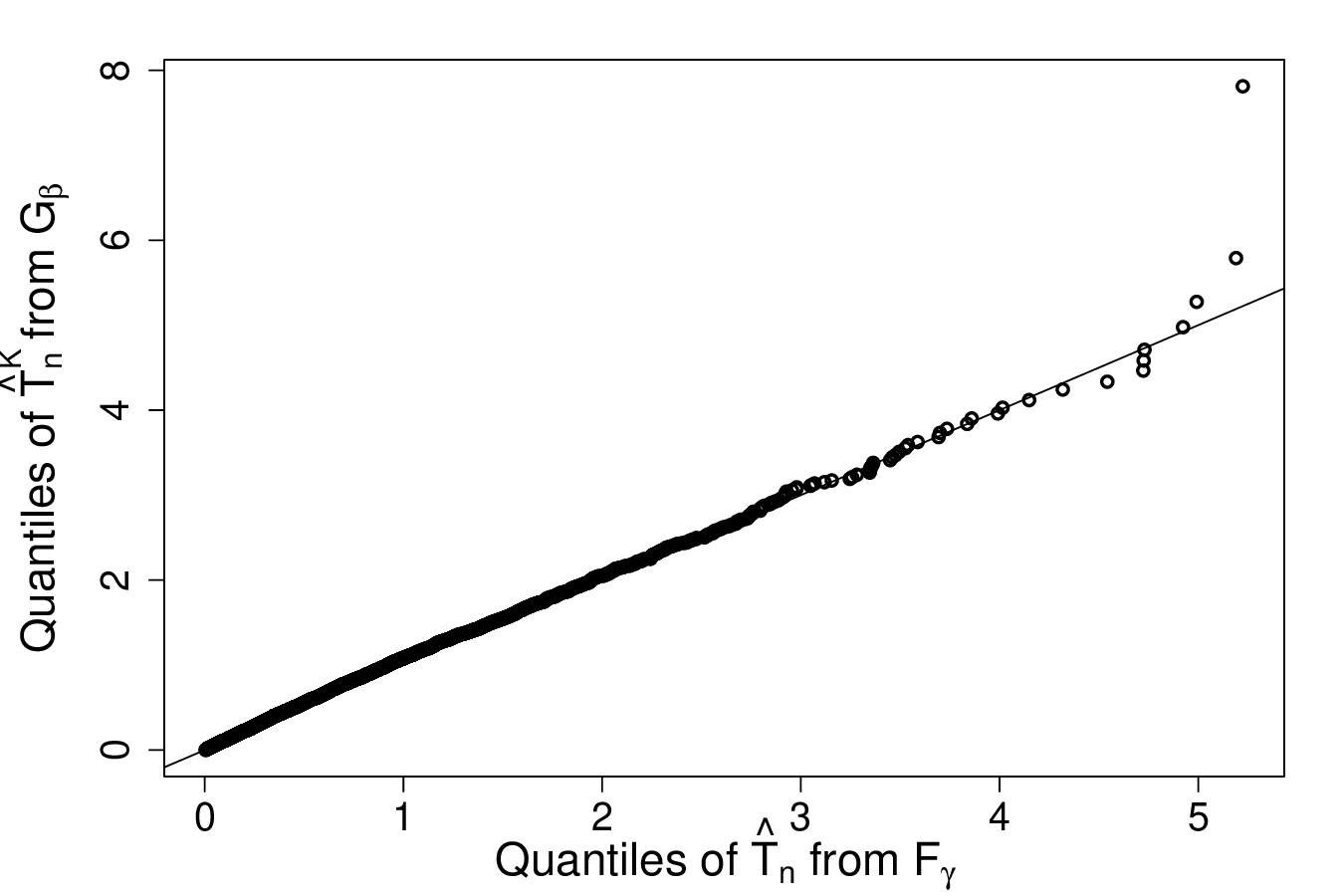} %
    \includegraphics[width=0.48\textwidth]{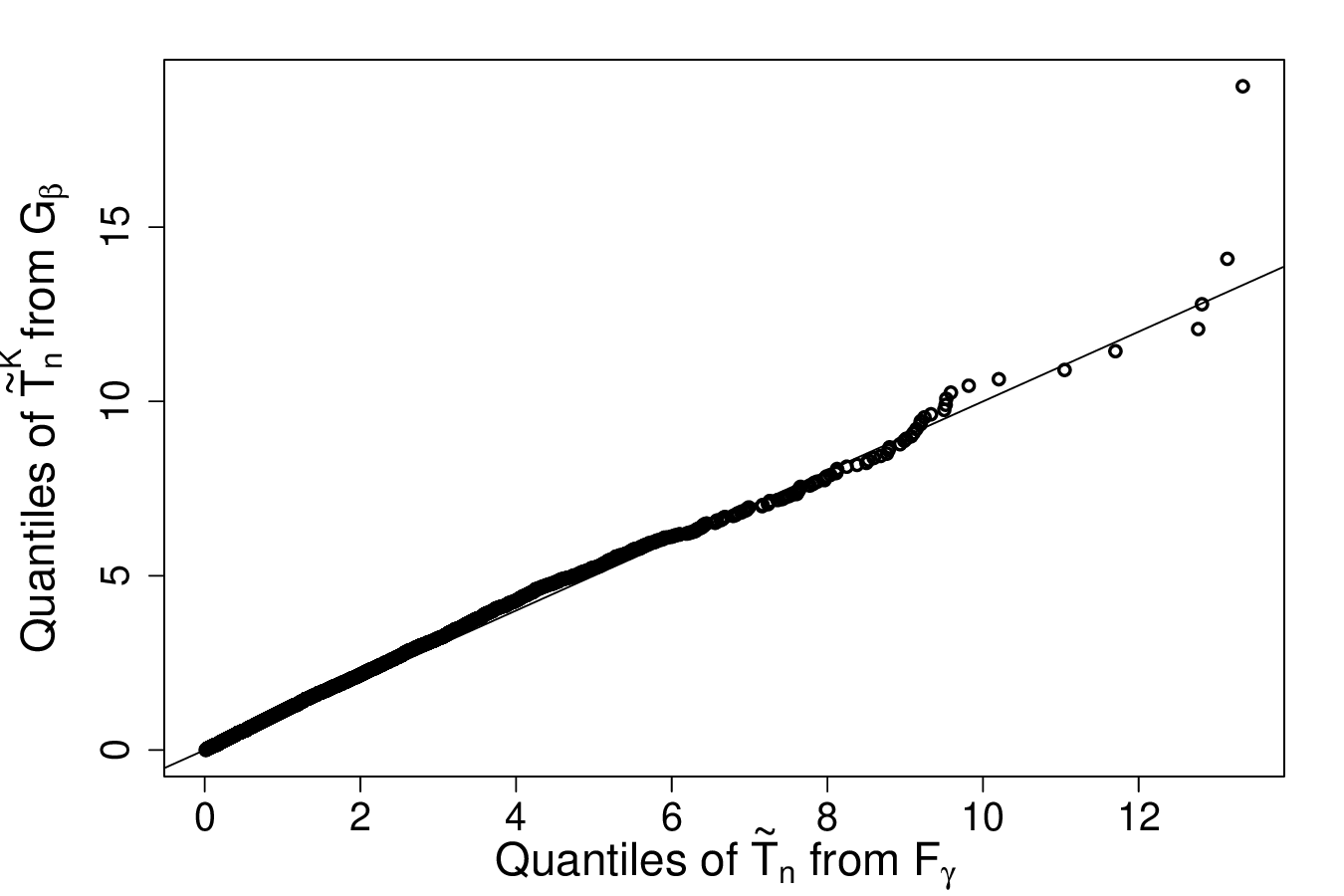}%
    \caption{Left: QQ plots of the simulated order selection test statistics in \eqref{eqn:ordersel} under $G_{\vect{\beta}}$ and in \eqref{eqn:K2st2} under $F_{\vect{\gamma}}$. Right: QQ plots of the simulated subset selection test statistics in \eqref{eqn:subsetsel} 
under $G_{\vect{\beta}}$ and in \eqref{eqn:K2st2} under $F_{\vect{\gamma}}$. For both statistics computed under $G_{\vect{\beta}}$,  the functions $h_{j\vect{\beta}}$ are elements of the $K2$ basis costructed as in Proposition \ref{prop:6}.}
 \label{fig:image4}
\end{figure}

We use the approach described in Section \ref{sec:4} to test the validity of \eqref{eqn:trueg}. The reference distribution $F_{\vect{\gamma}}$ considered is a simplified version of \eqref{eqn:trueg}. In particular, it consists of a convex combination of the uniform distribution and truncated normal distributions over $\mathcal{X}$. Its density is:
\begin{equation*}
    \begin{aligned}
        f_{\vect{\gamma}}(x) = (1- \gamma_{1}-\gamma_{2}-\gamma_{3}) b(x) + \sum\limits_{r=1}^3 \gamma_r p_{r}(x,\mu_r,0.05), \quad x\in\mathcal{X}
    \end{aligned}
\end{equation*}
where $p_r$ is the density of a truncated normal over $\mathcal{X}$, with $\mu_r$ being the known positions of the spectral lines and listed after \eqref{eq:LRF}. The unknown parameter $\vect{\gamma}=(\gamma_1,\gamma_2,\gamma_3)$ is estimated via the maximum likelihood. The basis functions for $F_{\vect{\gamma}}$ are chosen to be compositions of normalized shifted Legendre polynomials on $[0,1]$ with $F_{\vect{\gamma}}$. These functions are employed as a starting point to construct the K2-transformed basis functions $h_{j\vect{\beta}}$ for $L^2(G_{\vect{\beta}})$, whose residuals are subsequently used when calculating the order selection and subset selection statistics in \eqref{eqn:ordersel} and \eqref{eqn:subsetsel}, respectively, with $\widehat{S}_{m,n}$ as in \eqref{eqn:non_norm} and $\{h_{j\vect{\beta}}\}_{j=1}^{M_n}$ as in Proposition \ref{prop:6}. Hence, consistently with the notation used in the previous section, we denote them with $\widehat{T}_n^K$ and $\widetilde{T}_n^K$. Their limiting null distributions are obtained by simulating 100,000 realizations of the statistics \eqref{eqn:K2st2} under $F_{\bm \gamma}$ through the projected bootstrap (cf. Section \ref{sec:3}). The maximum number of basis functions used is  $M_n=6$. To assess the accuracy of the approximation, we also simulated the distributions of $\widehat{T}_n^K$ and $\widetilde{T}_n^K$ under $G_{\vect{\beta}}$. As shown by the QQ-plots in Figure \ref{fig:image4}, the null distributions of such statistics
under $G_{\vect{\beta}}$ are indistinguishable from those of the statistics in \eqref{eqn:K2st2} under  $F_{\vect{\gamma}}$. 

Finally, the p-values from the order selection and subset selection test statistics are 0.463 and 0.454, respectively, indicating that the hypothesized model in \eqref{eqn:trueg} fits the observed spectrum well. 

From an astrophysical standpoint, the Gaussian peaks $s_r(x)$, $r=1,2,3$, in \eqref{eqn:trueg} correspond to iron lines\footnote{Each chemical element produces a unique set of spectral lines that arise from changes in atomic energy levels. Iron stands out because it has an exceptionally large number of these lines spanning ultraviolet, visible, and infrared light. As a result, iron is extremely valuable for modeling and interpreting observed spectra in many scientific studies.} from various ionization states. Specifically, $s_1$ and $s_2$ correspond to the Fe XXV and Fe XXVI lines, which occur in extremely hot conditions where the iron has lost most of its electrons. In contrast, $s_3$ corresponds to the Fe K$\alpha$ line -- a fluorescent signal emitted by iron atoms that still hold most of their electrons, indicating that some X-rays  
are being reflected by nearby, cooler, denser material. Hence, by failing to reject the model in  \eqref{eqn:trueg}, our test is in agreement with the claim of \cite{Danehkar2021} of a multi-phase environment in \rtcru, where a very hot, highly ionized plasma -- responsible for the Fe XXV and Fe XXVI lines -- coexists with cooler, denser material that produces the Fe K$\alpha$ fluorescence. 

\section{Summary and discussion}
\label{sec:6}
This article introduces a new class of smooth test statistics whose asymptotic distribution-freeness is achieved by relying on the K2 transform. The latter consists of a change of variable in functional space, which enables the construction of an empirical process with a standard asymptotic null distribution.  In the context of smooth tests, such a transformation is especially valuable in that it yields a new family of orthonormal bases in $L^2(G_{\beta})$. Test statistics based on elements of such bases are shown to be asymptotically distribution-free even when the parameters are estimated, and the sample size is only moderately large.

The projected bootstrap is also discussed as a computationally efficient alternative to the classical parametric bootstrap. In particular,  the projection structure induced by parameter estimation allows us to simulate the null distribution of the test statistics of interest without re-estimating the model parameters at each bootstrap replicate. Simulation experiments show that the computational gain attained by the projected bootstrap can be substantial compared to the parametric bootstrap, especially when the estimation of the parameters is CPU-intensive.

While the present manuscript focuses on the univariate setting, the proposed framework can be easily adapted to test multivariate parametric models. In particular, when $G_{\bm{\beta}}$ is $D$-dimensional distribution, the reference distribution, $F_{\bm{\gamma}}$ could be chosen to be a product of $D$ univariate distributions $F_{\bm{\gamma},d}$, $d=1,\dots,D$. The K2 orthonormal basis in $L^2(G_{\bm{\beta}})$ can then be constructed by applying the K2 transform to a tensor product of bases in $L^2(F_{\gamma,d})$ (e.g., \cite{algeri2021}). 

Extensions to smooth tests for regression \citep{inglot2006data, rayner2022smooth} are also possible. In this case, instead of relying on the classical empirical process for i.i.d. data to express the statistics of interest as functionals from it (see Section \ref{sec:2}),  the random measure to be used is the weighted empirical process proposed in  \cite{KHMALADZE2017348}.

\section{Acknowledgments}
The authors thank Charlie Geyer, Teresa Ledwina, and two anonymous referees for the valuable feedback.

The work of XZ and SA was in part supported by the Research and Innovation Office at the University of Minnesota. SA was partially supported by NSF grant DMS-2152746. XZ was partially supported by the University of Minnesota Data Science Initiative.

\section{Code Availability}
The R code used to conduct the simulations and analyses in Sections \ref{sec:3.1}, \ref{sec:4.2}, and \ref{sec:5} is available at \url{https://github.com/xiangyu2022/DisfreeSmoothTests}.

\appendix
\section*{Appendix} 

\mysection{Proof of Proposition 4}  \label{App:unitary}
\begin{proof}
To show that $K$ is a unitary operator, we need to demonstrate that it is surjective and preserves the inner product. It is surjective because for any function in $L^{2}(G_{\vect{\beta}})$, if $\phi \perp span(\vect{1},l_{\vect{\gamma},\vect{\beta}})$,  
\begin{equation*}
\begin{aligned}
K\phi &= \phi-\frac{l_{\vect{\gamma},\vect{\beta}}-\vect{1}}{\vect{1}-\langle l_{\vect{\gamma},\vect{\beta}}, \vect{1}\rangle_{G_{\vect{\beta}}}}\langle l_{\vect{\gamma},\vect{\beta}}-\vect{1}, \phi \rangle_{G_{\vect{\beta}}} = \phi.
\end{aligned}
\end{equation*}
Otherwise if $\phi = c_1 \vect{1} + c_2 l_{\vect{\gamma},\vect{\beta}},$ we have 
\begin{equation}
\begin{aligned}
K(c_1 l_{\vect{\gamma},\vect{\beta}}+ c_2 \vect{1}) &= c_1 l_{\vect{\gamma},\vect{\beta}}+ c_2 \vect{1} - \frac{l_{\vect{\gamma},\vect{\beta}}-\vect{1}}{\vect{1}-\langle l_{\vect{\gamma},\vect{\beta}}, \vect{1}\rangle_{F_{\vect{\gamma}}}}\langle l_{\vect{\gamma},\vect{\beta}}-\vect{1}, c_1 l_{\vect{\gamma},\vect{\beta}}+ c_2 \vect{1} \rangle_{F_{\vect{\gamma}}} \\
&= c_1 l_{\vect{\gamma},\vect{\beta}}+ c_2 \vect{1} - c_1 (l_{\vect{\gamma},\vect{\beta}}-\vect{1}) +c_2 (l_{\vect{\gamma},\vect{\beta}}- \vect{1}) \\ 
&= c_1 \vect{1} + c_2 l_{\vect{\gamma},\vect{\beta}}.
\label{eqn:c1c2}
\end{aligned}
\end{equation}
Notice here, by letting $c_1=1, c_2=0$ or $c_1=0, c_2 = 1$ in equation \eqref{eqn:c1c2}, we obtain 
$$K l_{\vect{\gamma},\vect{\beta}} = \vect{1}, \quad K \vect{1} = l_{\vect{\gamma},\vect{\beta}}.$$
The operator $K$ preserves the inner product because for any functions $\phi_1, \phi_2 \in L^2(G_{\vect{\beta}})$ 
\begin{equation*}
\begin{aligned}
&\inner{K\phi_1,K\phi_2}_{G_{\vect{\beta}}} \\
&= \inner{\phi_1,\phi_2}_{G_{\vect{\beta}}} - \frac{2\inner{\vect{1}-l_{\vect{\gamma},\vect{\beta}},\phi_1}_{G_{\vect{\beta}}}\inner{\vect{1}-l_{\vect{\gamma},\vect{\beta}},\phi_2}_{G_{\vect{\beta}}}}{1-\inner{l_{\vect{\gamma},\vect{\beta}},1}_{G_{\vect{\beta}}}} \\
& \hspace{2.2cm}+ \frac{\inner{\vect{1}-l_{\vect{\gamma},\vect{\beta}},\phi_1}_{G_{\vect{\beta}}}\inner{\vect{1}-l_{\vect{\gamma},\vect{\beta}},\phi_2}_{G_{\vect{\beta}}} \inner{\vect{1}-l_{\vect{\gamma},\vect{\beta}},\vect{1}-l_{\vect{\gamma},\vect{\beta}}}_{G_{\vect{\beta}}}}{(1-\inner{l_{\vect{\gamma},\vect{\beta}},1}_{G_{\vect{\beta}}})^2} \\ 
&= \inner{\phi_1,\phi_2}_{G_{\vect{\beta}}} + \frac{\inner{\vect{1}-l_{\vect{\gamma},\vect{\beta}},\phi_1}_{G_{\vect{\beta}}}\inner{\vect{1}-l_{\vect{\gamma},\vect{\beta}},\phi_2}_{G_{\vect{\beta}}} }{(1-\inner{l_{\vect{\gamma},\vect{\beta}},1}_{G_{\vect{\beta}}})^2} \left(- 2 + 2 \inner{l_{\vect{\gamma},\vect{\beta}},1}_{G_{\vect{\beta}}} + \inner{\vect{1}-l_{\vect{\gamma},\vect{\beta}},\vect{1}-l_{\vect{\gamma},\vect{\beta}}}_{G_{\vect{\beta}}} \right)\\ 
&= \inner{\phi_1,\phi_2}_{G_{\vect{\beta}}}. 
\end{aligned}
\end{equation*}
$K$ is self-adjoint because 
\begin{equation*}
    \begin{aligned}
\inner{K\phi_1, \phi_2}_{G_{\vect{\beta}}} &= \inner{\phi_1-\frac{l_{\vect{\gamma},\vect{\beta}}-\vect{1}}{\vect{1}-\langle l_{\vect{\gamma},\vect{\beta}}, \vect{1}\rangle_{G_{\vect{\beta}}}}\langle l_{\vect{\gamma},\vect{\beta}}-\vect{1}, \phi_1 \rangle_{G_{\vect{\beta}}},\phi_2}_{G_{\vect{\beta}}} \\ 
&= \inner{\phi_1, \phi_2}_{G_{\vect{\beta}}} - \frac{\inner{l_{\vect{\gamma},\vect{\beta}}-\vect{1},\phi_1}_{G_{\vect{\beta}}}\inner{l_{\vect{\gamma},\vect{\beta}}-\vect{1},\phi_2}_{G_{\vect{\beta}}}}{\vect{1}-\langle l_{\vect{\gamma},\vect{\beta}}, \vect{1}\rangle_{G_{\vect{\beta}}}}\\
&= \inner{\phi_1,\phi_2-\frac{l_{\vect{\gamma},\vect{\beta}}-\vect{1}}{\vect{1}-\langle l_{\vect{\gamma},\vect{\beta}}, \vect{1}\rangle_{G_{\vect{\beta}}}}\langle l_{\vect{\gamma},\vect{\beta}}-\vect{1}, \phi_2 \rangle_{F_{\vect{\gamma}}}}_{G_{\vect{\beta}}} \\ 
&= \inner{\phi_1, K\phi_2}_{G_{\vect{\beta}}}.
    \end{aligned}
\end{equation*}
The unitary and self-adjoint properties of $K$ imply $K^2=I.$
\end{proof}

\mysection{Required Orthogonality Conditions of \texorpdfstring{$\widetilde{c}_{\lambda_k}$}{TEXT}}  \label{App:orthor}
\begin{proof}
It can be verified that 
$$ \inner{
\widetilde{c}_{\lambda_{2}},\vect{1}}_{G_{\vect{\beta}}}= \inner{
U_{b_{\vect{\beta}_{1}}c_{\lambda_{1}}}
c_{\lambda_{2}},\vect{1}}_{G_{\vect{\beta}}} = \inner{c_{\lambda_{2}},U_{b_{\vect{\beta}_{1}}c_{\lambda_{1}}}\vect{1}}_{G_{\vect{\beta}}}=\inner{c_{\lambda_{2}},\vect{1}}_{G_{\vect{\beta}}}=0,$$
$$
\inner{
\widetilde{c}_{\lambda_{2}},b_{\vect{\beta}_1}}_{G_{\vect{\beta}}}= \inner{
U_{b_{\vect{\beta}_{1}}c_{\lambda_{1}}}
c_{\lambda_{2}},b_{\vect{\beta}_1}}_{G_{\vect{\beta}}} = \inner{c_{\lambda_{2}},U_{b_{\vect{\beta}_{1}}c_{\lambda_{1}}}b_{\vect{\beta}_1}}_{G_{\vect{\beta}}}=\inner{c_{\lambda_{2}},c_{\lambda_1}}_{G_{\vect{\beta}}}=0;$$
and
\begin{equation*}
\begin{split}
 \inner{
\widetilde{c}_{\lambda_{3}},\vect{1}}_{G_{\vect{\beta}}}&= \inner{
U_{b_{\vect{\beta}_{2}}\widetilde{c}_{\lambda_{2}}}\circ U_{b_{\vect{\beta}_{1}}c_{\lambda_{1}}}
c_{\lambda_{3}},\vect{1}}_{G_{\vect{\beta}}} = \inner{c_{\lambda_{3}},U_{b_{\vect{\beta}_{1}}c_{\lambda_{1}}}\circ U_{b_{\vect{\beta}_{2}}\widetilde{c}_{\lambda_{2}}}\vect{1}}_{G_{\vect{\beta}}}=\inner{c_{\lambda_{3}},\vect{1}}_{G_{\vect{\beta}}}=0,\\
\inner{
\widetilde{c}_{\lambda_{3}},b_{\vect{\beta}_{1}}}_{G_{\vect{\beta}}}&= \inner{U_{b_{\vect{\beta}_{2}}\widetilde{c}_{\lambda_{2}}}\circ U_{b_{\vect{\beta}_{1}}c_{\lambda_{1}}}c_{\lambda_{3}},b_{\vect{\beta}_1}}_{G_{\vect{\beta}}} = \inner{c_{\lambda_{3}},U_{b_{\vect{\beta}_{1}}c_{\lambda_{1}}}\circ U_{b_{\vect{\beta}_{2}}\widetilde{c}_{\lambda_{2}}}b_{\vect{\beta}_1}}_{G_{\vect{\beta}}}=\inner{c_{\lambda_{3}},c_{\lambda_1}}_{G_{\vect{\beta}}}=0,\\
\inner{
\widetilde{c}_{\lambda_{3}},b_{\vect{\beta}_{2}}}_{G_{\vect{\beta}}} &= \inner{U_{b_{\vect{\beta}_{2}}\widetilde{c}_{\lambda_{2}}}\circ U_{b_{\vect{\beta}_{1}}c_{\lambda_{1}}}c_{\lambda_{3}},b_{\vect{\beta}_2}}_{G_{\vect{\beta}}} = \inner{c_{\lambda_{3}},U_{b_{\vect{\beta}_{1}}c_{\lambda_{1}}}\circ U_{b_{\vect{\beta}_{2}}\widetilde{c}_{\lambda_{2}}}b_{\vect{\beta}_2}}_{G_{\vect{\beta}}}\\&
=\inner{c_{\lambda_{3}}, U_{b_{\vect{\beta}_{1}},c_{\lambda_1}}\circ U_{b_{\vect{\beta}_{1}}c_{\lambda_{1}}}c_{\lambda_1}}_{G_{\vect{\beta}}}=0.
\end{split}
\end{equation*}
By induction, the same can be done for $\widetilde{c}_{\lambda_4},\dots,\widetilde{c}_{\lambda_p}$.

\end{proof}

\newpage
\nocite*{}  
\bibliographystyle{apalike}
\bibliography{bibfile}
\end{document}